\documentclass[11pt,reqno]{amsart}
\usepackage[T1]{fontenc}
\usepackage{lmodern}
\usepackage[margin=1.05in]{geometry}
\usepackage{amsmath,amssymb,amsthm,mathtools}
\usepackage{microtype}
\usepackage{enumitem}
\usepackage[dvipsnames]{xcolor}
\usepackage[colorlinks=true,linkcolor=MidnightBlue,citecolor=MidnightBlue,urlcolor=MidnightBlue]{hyperref}
\hypersetup{pdftitle={Bounded symbols for truncated Toeplitz operators: the one-component characterization},
 pdfsubject={Bounded symbols for truncated Toeplitz operators},
 pdfauthor={Pan Ma}}
\newtheorem{theorem}{Theorem}[section]
\newtheorem{proposition}[theorem]{Proposition}
\newtheorem{lemma}[theorem]{Lemma}
\newtheorem{corollary}[theorem]{Corollary}
\theoremstyle{definition}

\theoremstyle{remark}
\newtheorem{remark}[theorem]{Remark}
\numberwithin{equation}{section}
\newcommand{\D}{\mathbb D}
\newcommand{\T}{\mathbb T}

\newcommand{\abs}[1]{\lvert #1\rvert}
\newcommand{\norm}[1]{\lVert #1\rVert}
\newcommand{\ip}[2]{\langle #1,#2\rangle}

\newcommand{\TT}{\mathcal T}
\newcommand{\Sone}{\mathcal S_1}
\DeclareMathOperator{\dist}{dist}
\DeclareMathOperator{\supp}{supp}
\DeclareMathOperator{\Rea}{Re}
\allowdisplaybreaks[2]
\setlist[enumerate]{label=\textup{(\roman*)},leftmargin=2.2em}
\title[Bounded Symbols and One-Component Inner Functions]
      {Bounded Symbols and One-Component Inner Functions}
\author{Pan Ma}
\address{School of Mathematics and Statistics, HNP-LAMA,
Central South University, Changsha 410083, China}
\email{pan.ma@csu.edu.cn}
\subjclass[2020]{47B35, 46E22, 30J05}
\keywords{Truncated Toeplitz operator, bounded symbol, model space,
one-component inner function, hyperbolic derivative}

\begin{document}
\begin{abstract}
	Let $\theta$ be a nonconstant inner function, we prove that every bounded truncated Toeplitz operator on the model
	space $K_\theta=H^2\ominus\theta H^2$ admits a bounded symbol if and
	only if $\theta$ is one-component, that
	is, if $\{z\in\mathbb D:|\theta(z)|<r\}$ is nonempty and connected for some
	$r\in(0,1)$. This establishes the previously open necessary direction
	and resolves a conjecture of Baranov, Chalendar, Fricain, Mashreghi,
	and Timotin. For every non-one-component inner function, we construct
	a sequence of norm-one rank-one truncated Toeplitz operators whose
	infimal $L^\infty$-norms over all representing symbols tend to infinity.
	The quantitative lower bound is obtained by forming two inner divisors from angularly separated parts of the zero
	set and singular measure. The argument applies without restrictions on the
	Blaschke or singular parts of $\theta$. The order of this lower bound
	is sharp, already for a family of degree-two Blaschke products. A continuation argument for local inverse branches produces the
	required sequence of points. Consequently, every non-one-component model
	space supports a trace-class truncated Toeplitz operator without a
	bounded symbol. Combined with known equivalences, the characterization
	also yields geometric criteria for weak factorization and for
	continuous-symbol representations of compact truncated Hankel
	operators.
\end{abstract}
\maketitle

\section{Introduction and main results}\label{sec:intro}

\subsection{Background and main results}

Let $\theta$ be a nonconstant inner function on the unit disk $\D$, and let
$K_\theta=H^2\ominus\theta H^2$ be its model space. If $P_\theta$ denotes
the orthogonal projection from $L^2(\T)$ onto $K_\theta$, a symbol
$\varphi\in L^2(\T)$ defines the truncated Toeplitz operator
\begin{equation}\label{intro:tto}
 A_\varphi^\theta f=P_\theta(\varphi f),\qquad
 f\in K_\theta\cap H^\infty.
\end{equation}
If $A_\varphi^\theta$ extends boundedly to $K_\theta$, the same
notation denotes its bounded extension, and
$\TT(\theta)$ denotes the space of all such operators. A bounded
truncated Toeplitz operator need not admit a symbol in $L^\infty(\T)$ in \cite{BCFMT,Sarason}. Thus boundedness of the compression, which is initially defined by an
$L^2$ symbol, need not produce an $L^\infty$ representative. Determining
when the natural bounded-symbol map is onto is therefore a Nehari-type
problem for model spaces. Our main theorem shows that this
operator-theoretic property is governed exactly by the geometry of the
sublevel sets of the underlying inner function.

Sarason initiated the systematic operator-theoretic study of this class
and raised the bounded-symbol problem in \cite{Sarason}.  Baranov, Chalendar, Fricain, Mashreghi, and Timotin constructed
bounded truncated Toeplitz operators without bounded symbols, including
rank-one examples, and proposed the following geometric characterization
\cite[p.~2700]{BCFMT}: every operator in $\TT(\theta)$ has a bounded
symbol if and only if $\theta$ is one-component. Here $\theta$ is
\emph{one-component} if
\begin{equation}\label{intro:cls}
 \Omega_\theta(r)=\{z\in\D:\abs{\theta(z)}<r\}
\end{equation}
is nonempty and connected for some $r\in(0,1)$. This class arose in
Cohn's work on Carleson embeddings \cite{Cohn}. Aleksandrov
characterized it through independence of the embedding classes from
the exponent \cite{Aleksandrov}; Nicolau and Reijonen gave a criterion
in terms of the zeros and the singular measure \cite{NicolauReijonen}. Baranov, Bessonov, and Kapustin proved the sufficient direction
\cite[Corollary~2.5]{BBK} and restated the remaining converse as
\cite[Conjecture~2.6]{BBK}. Their Theorem~2.4 identifies the same
bounded-symbol property with an equality of Carleson embedding classes
and a weak factorization property. We prove the necessary direction for
every nonconstant inner function by showing that, if $\theta$ is not
one-component, then the quantities $\beta_\theta(R_a^\theta)$,
$a\in\D$, are unbounded.

For $A\in\TT(\theta)$, set
\begin{equation}\label{intro:beta}
 \beta_\theta(A)
 =\inf\{\norm{\varphi}_\infty:\varphi\in L^\infty(\T),\
                         A_\varphi^\theta=A\},
\end{equation}
with the infimum of the empty set understood to be $+\infty$.
We use normalized arc measure $dm(e^{it})=dt/(2\pi)$ and inner products
linear in the first argument. The reproducing kernel, its canonical conjugate, its squared norm,
and the normalized rank-one operator used below are
\begin{align}
 k_a^\theta(z)&=\frac{1-\overline{\theta(a)}\theta(z)}{1-\bar a z},
 &\widetilde k_a^\theta(z)&=\frac{\theta(z)-\theta(a)}{z-a},\notag\\
 D_\theta(a)&=\frac{1-\abs{\theta(a)}^2}{1-\abs a^2},
 &R_a^\theta&=\frac{\widetilde k_a^\theta\otimes k_a^\theta}{D_\theta(a)}.
 \label{intro:rankone}
\end{align}
The difference quotient is interpreted by analytic continuation at $z=a$.
Our tensor convention is $(u\otimes v)f=\ip{f}{v}u$.
For every $a\in\D$, the operator $R_a^\theta$ is a norm-one truncated
Toeplitz operator.
\begin{theorem}\label{thm:main}
For every nonconstant inner function $\theta$, the following conditions
are equivalent:
\begin{enumerate}
\item $\theta$ is one-component;
\item every operator in $\TT(\theta)$ has a bounded symbol;
\item $\displaystyle \sup_{a\in\D}\beta_\theta(R_a^\theta)<\infty$.
\end{enumerate}
\end{theorem}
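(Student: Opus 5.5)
The implications are arranged as (i)$\Rightarrow$(ii)$\Rightarrow$(iii)$\Rightarrow$(i). The first is the sufficient direction of Baranov, Bessonov, and Kapustin \cite[Corollary~2.5]{BBK}, which we quote directly.

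For (ii)$\Rightarrow$(iii) we use a closed-graph argument. Consider the map $L^\infty(\T)\to\TT(\theta)$, $\varphi\mapsto A_\varphi^\theta$. It is bounded, since $\norm{A_\varphi^\theta}\le\norm\varphi_\infty$. Its kernel $\overline{\theta H^\infty}+\theta H^\infty$ (more precisely, $L^\infty\cap(\overline{\theta H^2}+\theta H^2)$) is weak-$*$ closed. The space $\TT(\theta)$ is closed in the operator norm, because it is weakly closed by Sarason. Under (ii) the map is onto, so the induced map from the quotient Banach space onto $\TT(\theta)$ is a continuous bijection. The open mapping theorem then gives a constant $C$ with $\beta_\theta(A)\le C\norm A$ for all $A$. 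It remains to know that each $R_a^\theta$ lies in $\TT(\theta)$ with norm one, which the paper asserts. We confirm membership via Sarason's characterization: $\widetilde k_a^\theta\otimes k_a^\theta$ is a TTO, with symbol $\theta\overline{(z-a)}$-type, or equivalently $\overline{\theta}/(\ldots)$. The norm is $\norm{\widetilde k_a}\norm{k_a}/D_\theta(a)=1$, since both kernels have squared norm $D_\theta(a)$. Hence $\sup_a\beta_\theta(R_a^\theta)\le C$.

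The substance is (iii)$\Rightarrow$(i), proved by contraposition: assume $\theta$ is not one-component and produce $a_n$ with $\beta_\theta(R_{a_n}^\theta)\to\infty$. First we need a dual lower bound for $\beta_\theta$. By duality between $L^\infty$ modulo the kernel and the predual, for any $A\in\TT(\theta)$ we have
\[
\beta_\theta(A)=\sup\Bigl\{\abs{\operatorname{tr}(AT)}:\ T=\textstyle\sum f_j\otimes \bar g_j,\ \bigl\|\sum f_j g_j\bigr\|_{L^1}\le 1\Bigr\},
\]
equivalently a sup over $h\in \overline{\{\sum f_jg_j: f_j,g_j\in K_\theta\}}^{L^1}$. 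For the rank-one operator this reads
\[
\beta_\theta(R_a^\theta)\ge \frac{\abs{\ip{\widetilde k_a^\theta\, \overline{?}}{}\ldots}}{}
\]
Concretely, pairing $R_a^\theta$ with $f\otimes g$ gives $\ip{f}{k_a}\,\ip{\widetilde k_a}{\bar g}/D_\theta(a)$. This is a point evaluation of $f$ times a functional of $g$, divided by $D_\theta(a)$, to be compared with $\norm{fg}_1$. We test with $f=k_b^\theta$ and $g$ a conjugate kernel at a point $b$ chosen so that $fg$ has small $L^1$ norm but large pairing.

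The construction runs as follows. Non-one-component means that for every $r$ the sublevel set $\Omega_\theta(r)$ is disconnected or the Nicolau--Reijonen criterion fails. We use this to find points $a_n$ with $\abs{\theta(a_n)}$ bounded away from $0$ and from $1$ (or tending to $1$ at a controlled rate), near which the zeros and singular mass split into two angularly separated clusters. From these clusters we factor $\theta=\theta_1\theta_2 u$ with $\theta_1,\theta_2$ inner divisors carrying the two clusters. Test functions built from $k_{a}^{\theta_1}$ and $\theta_1 k_a^{\theta_2}$-type pieces lie in $K_\theta$. Their product has $L^1$ norm controlled by the separation, while the pairing with $R_{a_n}^\theta$ stays of order one. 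The ratio blows up like a power of the hyperbolic quantity $(1-\abs{a}^2)\abs{\theta'(a)}/(1-\abs{\theta(a)}^2)$ or of the separation ratio.

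\emph{Main obstacle.} The main obstacle is producing the sequence $a_n$ for an arbitrary non-one-component $\theta$. We would follow local inverse branches of $\theta$ along a path in $\D$ joining two components of $\Omega_\theta(r)$. The lift of a radial segment in the $w$-plane is continued until it either hits a critical point or exits toward $\T$. In either case this yields points where the hyperbolic derivative degenerates while $\abs\theta$ stays in a fixed annulus, and hence where the two-divisor estimate applies with constants tending to infinity.
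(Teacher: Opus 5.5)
\textbf{Verdict: there is a genuine gap, in (iii)$\Rightarrow$(i).} Your (i)$\Rightarrow$(ii) and (ii)$\Rightarrow$(iii) match the paper: the Baranov--Bessonov--Kapustin sufficiency theorem, then the open mapping theorem. (The rank-one symbol you are after is Sarason's $\theta/(z-a)$.) Your duality lower bound is exactly Lemma~\ref{pre:pair}.

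\textbf{Why your mechanism for divergence fails.} In (iii)$\Rightarrow$(i), the quantitative estimate that turns the selected points into divergence is absent, and the mechanism you propose cannot work. Sarason's symbol gives the a priori bound
\[
 \beta_\theta(R_a^\theta)\le\frac{\norm{\theta/(z-a)}_\infty}{D_\theta(a)}
 =\frac{1+\abs a}{1-\abs{\theta(a)}^2}\le\frac{2}{1-\abs{\theta(a)}^2}.
\]
So points with $\abs{\theta(a_n)}$ bounded away from $1$ never give divergence, however small $q_\theta(a_n)$ is. The family $\theta_r$ of Proposition~\ref{prop:degree-two-sharpness} has $q_{\theta_r}(0)=0$ and antipodal zeros $\pm\sqrt r$, yet $\beta_{\theta_r}(R_0^{\theta_r})<\infty$. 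Hence neither smallness of $q_\theta$ nor good angular separation can by itself force $\beta_\theta$ to be large.

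Divergence must come from $t_n=-\log\abs{\theta(a_n)}\to0$. The paper uses $q_\theta(a_n)$ only through the bound $q_\theta(a_n)\le\frac12$. Nor is the normalized pairing ``of order one''. For the paper's test functions, $f(0)g(0)\asymp t^2$ and $D_\theta(0)\asymp t$. The whole gain $[t\log(e/t)]^{-1}$ comes from $\norm{fg}_1\lesssim t^2\log(e/t)$.

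\textbf{Missing: producing the clusters.} Your sketch only asserts that separated clusters exist ``near'' the points. In the paper, after recentering with Lemma~\ref{pre:unitary} (so $a=0$ and $\theta(0)=e^{-t}$), one uses the identity $\theta'(0)/\theta(0)=-2\int_\T\bar\eta\,d\nu(\eta)$ for $\nu=\sum_j\sinh(s_j)\delta_{\eta_j}+\mu$. The mass of $\nu$ is comparable to $t$. This turns $q_\theta(0)\le\frac12$ into $\abs{\int_\T\eta\,d\nu}\le\frac35\nu(\T)$. That moment bound forces an arc $E$ and a set $F$ at fixed distance from each other, each of $\nu$-mass $\gtrsim t$.

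\textbf{Missing: exploiting the clusters.} Take $\phi=\theta_E$, $\psi=\theta_F$, and $f=1-\phi(0)\phi$, $g=1-\psi(0)\psi\in K_\theta$. The uniform bound $\abs{1-\theta_G}\le(4/\delta)\sigma(G)$ at distance at least $\delta$ from $\overline G$ gives $\abs{fg}\lesssim t(\abs f+\abs g)$ on $\T$. The exact identity $\norm{1-a\vartheta}_1=(1-a^2)\int_\T\abs{1-a\xi}^{-1}\,dm(\xi)\le2(1-a^2)\log\frac{e}{1-a}$ then gives $\norm f_1+\norm g_1\lesssim t\log(e/t)$. None of these steps appear in your proposal.

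\textbf{The selection step.} Your idea of lifting radial segments through local inverse branches is in the spirit of Lemma~\ref{geom:retract}. However, it must deliver $\inf_{\{\abs\theta>\rho\}}q_\theta=0$ for every $\rho<1$, that is, points with $\abs{\theta(a_n)}\to1$, not points in a fixed annulus. The paper gets this as the contrapositive of the following fact: $q_F\ge\delta$ on $\{\abs F>\rho\}$ makes the radial lifts define a continuous retraction of $\D$ onto $\{\abs F\le R\}$.
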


Although every $R_a^\theta$ has a bounded symbol, condition~\textup{(iii)}
requires the corresponding symbol norms to be bounded uniformly in
$a$. Theorem \ref{thm:quantitative} shows that this uniform estimate fails whenever
$\theta$ is not one-component.

The operators $R_a^\theta$, $a\in\D$, already occur in the
necessary-condition argument of \cite[Theorem~5.6]{BCFMT}: the open
mapping theorem gives uniform bounds for representing symbols, which
imply $L^p$ estimates for reproducing kernels. Here we prove the
quantitative lower bound below from a small hyperbolic derivative by
using two inner divisors obtained from separated portions of the
canonical factorization. Together with the
geometric selection lemma, this proves the converse for every inner
function.

The quantitative estimate needed for the converse implication is expressed
in terms of the hyperbolic derivative
\begin{equation}\label{intro:q}
 q_\theta(a)=
 \frac{(1-\abs a^2)\abs{\theta'(a)}}{1-\abs{\theta(a)}^2}.
\end{equation}
By the Schwarz--Pick lemma, $0\le q_\theta(a)\le1$.

\begin{theorem}\label{thm:quantitative}
There is an absolute constant $c>0$ with the following property.
Let $\theta$ be a nonconstant inner function and $a\in\D$. If
\[
 t=-\log\abs{\theta(a)}\in(0,\tfrac12],
 \qquad q_\theta(a)\le\tfrac12,
\]
then
\begin{equation}\label{intro:quantitative}
 \beta_\theta(R_a^\theta)\ge\frac{c}{t\log(e/t)}.
\end{equation}
The constant is independent of $\theta$, $a$, the zero multiplicities,
and the singular measure in the canonical factorization.
\end{theorem}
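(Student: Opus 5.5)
The plan is to bound $\beta_\theta(R_a^\theta)$ from below by duality, testing the identity $A_\varphi^\theta=R_a^\theta$ against well-chosen products $f\bar g$ with $f,g\in K_\theta\cap H^\infty$. If $A_\varphi^\theta=R_a^\theta$, then for all such finite families
\[
 \sum_j\ip{R_a^\theta f_j}{g_j}=\int_\T\varphi\sum_j f_j\bar g_j\,dm .
\]
This gives
\[
 \beta_\theta(R_a^\theta)\ \ge\ \frac{\bigl|\sum_j\ip{R_a^\theta f_j}{g_j}\bigr|}{\norm{\sum_j f_j\bar g_j}_{L^1}} .
\]
Writing $C$ for the conjugation $Cg=\theta\bar z\bar g$, we have $\widetilde k_a^\theta=Ck_a^\theta$. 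Hence the numerator for a single pair is
\[
 \ip{R_a^\theta f}{g}=\frac{f(a)\,\overline{\ip{g}{Ck_a^\theta}}}{D_\theta(a)}=\frac{f(a)\,(Cg)(a)}{D_\theta(a)} .
\]
So the task is to find $f,g$ for which $f(a)(Cg)(a)$ is of size $\asymp D_\theta(a)$, while $\norm{f\bar g}_1\lesssim t\log(e/t)$. Note that $D_\theta(a)\asymp t/(1-\abs a^2)$ because $t\le\frac12$.

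\textbf{Step 1 (factorization).} Factor $\theta=\theta_1\theta_2$ into two inner divisors built from angularly separated parts of the zero set and singular measure, as announced in the introduction. The angular splitting is taken relative to the direction of $a$, with a cone of angle $\asymp 1-\abs a$ about $a/\abs a$. Since $t=-\log\abs{\theta(a)}=t_1+t_2$ with $t_j=-\log\abs{\theta_j(a)}$, each $\theta_j$ is close to unimodular at $a$. The smallness condition $q_\theta(a)\le\frac12$ should force the decomposition
\[
 \frac{\theta'}{\theta}=\frac{\theta_1'}{\theta_1}+\frac{\theta_2'}{\theta_2}
\]
at $a$ to exhibit cancellation. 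In that case each $\abs{\theta_j'(a)}(1-\abs a^2)$ is comparable to $t$ (or at least $\gtrsim t$), but the two terms point in roughly opposite directions. Choosing the split so that this happens is a pigeonhole/continuity argument in the angle of the cut. I expect this step to be the main obstacle: one needs a split in which \emph{both} parts carry a definite share of $t$ and both have large derivative, uniformly in arbitrary Blaschke and singular parts. My first attempt will be to sweep a dividing ray around $\T$ and use the intermediate value theorem applied to $\arg$ of the Poisson/Herglotz contributions to $\theta_1'/\theta_1(a)$.

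\textbf{Step 2 (test functions).} Take $f\in K_{\theta_1}$ and $g=\theta_1u$ with $u\in K_{\theta_2}$. Both lie in $K_\theta$, and $Cg=C_{\theta_2}u$. The product is $f\bar g=f\bar u\,\bar\theta_1$, so $\norm{f\bar g}_1=\norm{fu}_1$. Natural choices are $f=k_a^{\theta_1}$ and $u=k_a^{\theta_2}$ (or the conjugate kernels). These give numerators involving $D_{\theta_1}(a)\,\theta_2'(a)$ and its symmetric counterpart. Summing the two symmetric pairs with suitable unimodular coefficients makes the numerator essentially $\abs{\theta_1'(a)\theta_2(a)}+\abs{\theta_1(a)\theta_2'(a)}$, rather than the cancelling sum $\abs{\theta'(a)}$. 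This is precisely where $q_\theta(a)\le\frac12$ gets converted into a gain.

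\textbf{Step 3 ($L^1$ estimate).} Estimate $\norm{k_a^{\theta_1}k_a^{\theta_2}}_1$ directly rather than by Cauchy--Schwarz. Write
\[
 k_a^{\theta_j}=\frac{1-\overline{\theta_j(a)}\theta_j}{1-\bar az},
\]
where $\abs{1-\overline{\theta_j(a)}\theta_j}\lesssim t_j+\abs{\theta_j-\theta_j(a)}$. The factor $(1-\bar az)^{-2}$ has $L^1$ norm $\asymp(1-\abs a^2)^{-1}$. Because the zeros and singular masses of $\theta_1,\theta_2$ live in separated angular regions, the numerators cannot both be large on the same arc; near the point $a/\abs a$ both are $O(t)$. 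Splitting $\T$ into dyadic arcs around $a/\abs a$, at most $\asymp\log(e/t)$ scales contribute non-negligibly. This should yield
\[
 \norm{fu}_1\lesssim \frac{t\log(e/t)}{1-\abs a^2}\cdot\frac{1}{D_\theta(a)}\cdot(\text{numerator scale}),
\]
and combining with Steps 1--2 gives $\beta_\theta(R_a^\theta)\ge c/(t\log(e/t))$, with $c$ absolute since all the estimates depend only on $t$, $q_\theta(a)$ and Möbius-invariant quantities.
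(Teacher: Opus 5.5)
Your pairing setup is correct, but the step that carries the proof is missing, and the version you aim for would not work.

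First, the derivative numerators are a detour. Replacing $u=k_a^{\theta_2}$ by $u=\widetilde k_a^{\theta_2}$ leaves $\norm{fu}_1$ unchanged, since $\abs{\widetilde k_a^{\theta_2}}=\abs{k_a^{\theta_2}}$ on $\T$. It turns the numerator into $D_{\theta_1}(a)D_{\theta_2}(a)$, which dominates $D_{\theta_1}(a)\abs{\theta_2'(a)}$ by Schwarz--Pick. So cancellation in $\theta'/\theta$ is never what produces the gain. The only role of $q_\theta(a)\le\frac12$ is to guarantee two angularly \emph{separated} pieces, each of mass $\asymp t$.

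Second, that separation cannot come from a complete factorization $\theta=\theta_1\theta_2$. If the zeros and singular measure accumulate at every point of $\T$, the two closed angular supports cover $\T$ and must meet by connectedness. Then atoms of mass $\asymp t$, one in each factor, at distance $O(t)$ from each other already make (after recentering at $0$) $\norm{k_0^{\theta_1}k_0^{\theta_2}}_1$ of order $t$ instead of $t^2\log(e/t)$.

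The missing idea is the following. Recenter by Lemma~\ref{pre:unitary} and put $\nu=\sum_j\sinh(s_j)\delta_{\eta_j}+\mu$, where the zeros are $e^{-s_j}\eta_j$ and $\mu$ is the singular measure. Then $t\le\nu(\T)\le2\sinh(\frac12)\,t$ and $\theta'(0)/\theta(0)=-2\int_\T\overline\eta\,d\nu(\eta)$, so the hypothesis gives $\abs{\int_\T\eta\,d\nu(\eta)}\le\frac35\nu(\T)$. Take $E$ to be the heaviest of $16$ equal arcs, with center $\xi$, and $F$ the complement of the concentric open arc of half-width $\pi/6$. These sets are at distance at least $2\sin(5\pi/96)$. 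By pigeonhole, $\nu(E)\ge\nu(\T)/16$. The set $F$ carries mass $\gtrsim t$ because $\frac35\nu(\T)\ge\Rea\bigl(\overline\xi\int_\T\eta\,d\nu(\eta)\bigr)\ge\frac{\sqrt3}{2}\nu(\T\setminus F)-\nu(F)$ (Lemma~\ref{factor:split}). No sweeping or intermediate-value argument is needed. The mass between $E$ and $F$ goes into a third, buffer factor, $\theta=\theta_E\theta_F\chi$. One then pairs $f=k_0^{\theta_E}$ with $C_\theta g$, where $g=k_0^{\theta_F}$. Both lie in $K_\theta$ because $\theta_E$ and $\theta_F$ divide $\theta$, and $f(0)g(0)\asymp t^2$.

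Step~3 is likewise only heuristic. The logarithm does not come from dyadic arcs about $a/\abs a$: after recentering that point plays no role, and the mass need not be organized around it. It comes from two facts you would still have to prove.

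The first is the uniform off-support bound $\abs{1-\theta_G}\le(4/\delta)\sigma(G)\le4t/\delta$ at distance at least $\delta$ from $\overline G$. It holds for arbitrary Blaschke and singular parts (Lemma~\ref{factor:off}). Take $\delta$ equal to half the separation; then every point of $\T$ is $\delta$-far from $\overline E$ or from $\overline F$, whence $\norm{fg}_1\le Ct(\norm f_1+\norm g_1)$.

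The second is the exact identity $\norm{1-b\vartheta}_1=(1-b^2)\int_\T\abs{1-b\xi}^{-1}\,dm(\xi)\le2(1-b^2)\log\frac{e}{1-b}$ for inner $\vartheta$ with $\vartheta(0)=b\in(0,1)$ (Lemma~\ref{factor:lone}). It holds because the boundary map of $\vartheta$ pushes $m$ forward to harmonic measure at $b$. This gives $\norm f_1,\norm g_1\lesssim t\log(e/t)$, hence $\beta_\theta(R_0^\theta)\ge f(0)g(0)/(D_\theta(0)\norm{fg}_1)\gtrsim1/(t\log(e/t))$.

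Without this identity you only have $\norm f_1\le\norm f_2\asymp\sqrt t$. That yields $\beta_\theta(R_0^\theta)\gtrsim t^{-1/2}$: enough for divergence, but not the order asserted in the theorem.
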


The order in \eqref{intro:quantitative} is optimal uniformly over the
class in Theorem~\ref{thm:quantitative}. Indeed, for the degree-two
Blaschke products
\[
 \theta_r(z)=\frac{r-z^2}{1-rz^2},\qquad t=-\log r,
\]
Section~\ref{sec:examples} proves the precise asymptotic formula
\[
 \beta_{\theta_r}(R_0^{\theta_r})
 \sim\frac{\pi}{2t\log(1/t)}
 \qquad(t\downarrow0).
\]

\begin{corollary}\label{cor:divergence}
If $\theta$ is not one-component, there are points $a_n\in\D$ such that
$\abs{\theta(a_n)}\to1$, $q_\theta(a_n)\to0$, and
\[
 \norm{R_{a_n}^\theta}=1,\qquad
 \beta_\theta(R_{a_n}^\theta)\longrightarrow\infty.
\]
\end{corollary}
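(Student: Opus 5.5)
The corollary follows from Theorem~\ref{thm:quantitative} once we find points $a_n\in\D$ with $t_n=-\log\abs{\theta(a_n)}\to0$ and $q_\theta(a_n)\to0$. Indeed, for large $n$ we then have $t_n\le\tfrac12$ and $q_\theta(a_n)\le\tfrac12$, so \eqref{intro:quantitative} gives $\beta_\theta(R_{a_n}^\theta)\ge c/(t_n\log(e/t_n))\to\infty$, because $t\log(e/t)\to0$ as $t\downarrow0$. The equality $\norm{R_{a_n}^\theta}=1$ is the normalization already recorded before Theorem~\ref{thm:main}. So the real content is the geometric selection: if $\theta$ is not one-component, the hyperbolic derivative must be small somewhere near the boundary, where $\abs{\theta}$ is close to $1$.

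We argue by contradiction. Suppose that for some $\varepsilon>0$ and some $r_0<1$ we have $q_\theta(z)\ge\varepsilon$ whenever $\abs{\theta(z)}\ge r_0$. We would show that this forces $\Omega_\theta(r)$ to be connected for some $r<1$. Fix $r\in(r_0,1)$ close to $1$ and let $U,V$ be two components of $\Omega_\theta(r)$; every component is nonempty, since $\theta$ is nonconstant and not a unimodular constant. On each component, $\theta$ is a proper map onto the disk $\{\abs w<r\}$, hence surjective. Our plan is to join $U$ to $V$ inside a slightly larger sublevel set $\Omega_\theta(r')$ by continuing local inverse branches of $\theta$. On the annulus $\{r\le\abs{\theta}\le r'\}$ we have $\theta'\neq0$, because $q_\theta\ge\varepsilon>0$ there. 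The lower bound on $q_\theta$ says that $\theta$ is hyperbolically expanding at a definite rate, so a local inverse branch defined at a point with $\abs{\theta}=r$ extends along radial segments $w\mapsto sw$ in the $w$-plane. The hyperbolic length of the lifted path is at most $\varepsilon^{-1}$ times the hyperbolic length of the image path, so the lift cannot escape to $\T$ in finite image length.

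The key steps, in order, are as follows.
\begin{enumerate}
\item Show that every component of $\Omega_\theta(r)$ meets the level curve $\abs{\theta}=r$, and that the inverse branches extend outward to $\abs\theta=r'$ for every $r'<1$, using the bound $\rho(\gamma)\le\varepsilon^{-1}\rho(\theta\circ\gamma)$.
\item Use this to show that $\theta$ restricted to $\{r_0<\abs\theta<1\}$ is a covering map onto the annulus $\{r_0<\abs w<1\}$, with fibres of bounded hyperbolic spread.
\item Lifting the circle $\abs w=\rho$ as $\rho\to1$, conclude that the level sets are connected arcs accumulating on all of $\T$. Two distinct components of $\Omega_\theta(r)$ would then yield two disjoint families of lifts, each reaching $\T$ on a whole arc. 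This contradicts the fact that $\abs{\theta}=1$ a.e.\ on $\T$ together with the Nicolau--Reijonen criterion \cite{NicolauReijonen}, or alternatively Aleksandrov's characterization \cite{Aleksandrov} that non-one-component functions admit points $z$ with $\abs{\theta(z)}\to1$ and $(1-\abs z)\abs{\theta'(z)}/(1-\abs{\theta(z)})\to0$.
\end{enumerate}

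If the known equivalence from \cite{Aleksandrov} or \cite{NicolauReijonen} already states that one-component is equivalent to $\inf\{q_\theta(z):\abs{\theta(z)}>r\}>0$ for some $r<1$, then step~(iii) can simply be cited, and the covering argument is not needed.

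The main obstacle is step~(iii): turning the uniform expansion into connectedness of a single sublevel set. This step has to handle singular inner factors, where the boundary behaviour is not controlled by finitely many zeros. I would first try to cite the characterization in terms of $q_\theta$, and fall back on the covering-space continuation argument only if no citation is available.
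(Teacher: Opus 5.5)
\textbf{There is a genuine gap: the geometric selection is not established.} Your first paragraph is exactly the paper's proof of the corollary. The paper takes the points from Corollary~\ref{geom:sequence} and applies Theorem~\ref{thm:quantitative} after discarding finitely many terms. You rightly call the geometric selection the real content, but you do not prove it.

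\emph{The citation fallback is conditional.} As the introduction describes them, Aleksandrov's characterization is phrased through embedding classes, and the Nicolau--Reijonen criterion through the zeros and the singular measure. Neither, as cited, gives ``$q_\theta\ge\varepsilon$ on $\{\abs\theta\ge r_0\}$ implies one-component'' without a translation you do not supply.

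\emph{Your continuation argument does not close the gap either.} Some of its claims are false for general inner functions, and step~(iii) contains no mechanism. Take $S(z)=\exp\bigl(-\frac{1+z}{1-z}\bigr)$. With $x=-\log\abs{S(z)}$ one finds $q_S(z)=x/\sinh x$, so $S$ satisfies your hypothesis.
\begin{itemize}
\item Its sublevel set $\Omega_S(r)$ is a horodisk on which $S$ omits $0$. Hence $S$ is neither proper nor surjective onto $\{\abs w<r\}$ there, contrary to your claim.
\item Its level curves are horocycles accumulating only at $1$, not ``connected arcs accumulating on all of $\T$''.
\item Lifts, or components of $\Omega_\theta(r)$, reaching $\T$ along arcs are perfectly compatible with $\abs\theta=1$ a.e. So there is no contradiction to extract in~(iii).
\end{itemize}

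\textbf{The missing idea is to continue inverse branches \emph{inward}, not outward.} Outward continuation moves points away from the sublevel sets you want to connect. What you need is a way to push an arbitrary path in $\D$ down into $\{\abs\theta\le R\}$.

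Fix $R\in(r_0,1)$. For $z$ with $a=\abs{\theta(z)}>R$, lift the radial segment from $\theta(z)$ to $R\theta(z)/a$, starting at $z$. Your own length estimate shows that the lift stays in a compact subdisk. It therefore continues over the whole segment, with hyperbolic length at most $\varepsilon^{-1}(\operatorname{arctanh}a-\operatorname{arctanh}R)$. Let $P_R(z)$ be its endpoint, and set $P_R(z)=z$ when $\abs{\theta(z)}\le R$.

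Then $P_R$ is a continuous retraction of $\D$ onto $\{\abs\theta\le R\}$:
\begin{itemize}
\item away from the level set $\{\abs\theta=R\}$, continuity follows from a finite chain of inverse branches along the lift;
\item at points with $\abs{\theta}=R$, it follows from the displacement bound $\abs{P_R(z)-z}\le\varepsilon^{-1}\bigl(\operatorname{arctanh}\abs{\theta(z)}-\operatorname{arctanh}R\bigr)_+$.
\end{itemize}

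Now let $r\in(r_0,1)$ and $z_1,z_2\in\Omega_\theta(r)$. Choose $R$ with $\max\{r_0,\abs{\theta(z_1)},\abs{\theta(z_2)}\}<R<r$, and apply $P_R$ to any path in $\D$ from $z_1$ to $z_2$. The image is a path in $\Omega_\theta(r)$ with the same endpoints, so $\Omega_\theta(r)$ is connected. This is precisely Lemma~\ref{geom:retract} and Corollary~\ref{geom:sequence}. It needs no boundary behaviour of $\theta$, no covering-space theory, and no external characterization.
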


The same divergence also gives the following trace-class consequence.
\begin{corollary}\label{cor:traceclass}
If $\theta$ is not one-component, there is a trace-class operator
$A\in\TT(\theta)$ with no bounded symbol.
\end{corollary}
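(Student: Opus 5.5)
We must prove Corollary~\ref{cor:traceclass}: if $\theta$ is not one-component, some trace-class $A\in\TT(\theta)$ has no bounded symbol. The plan is a gliding-hump summation: sum rank-one truncated Toeplitz operators with summable coefficients whose bounded-symbol norms blow up, and combine it with the open mapping theorem.

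By Corollary~\ref{cor:divergence} there are points $a_n\in\D$ with $\norm{R_{a_n}^\theta}=1$ and $\beta_\theta(R_{a_n}^\theta)\to\infty$. Each $R_{a_n}^\theta$ is rank one, so its trace norm equals its operator norm, namely $1$. The trace-class truncated Toeplitz operators, $\Sone\cap\TT(\theta)$, form a closed subspace of the Banach space $\Sone$. Closedness holds because $\TT(\theta)$ is weak-operator closed in the bounded operators (Sarason), and trace-norm convergence implies weak-operator convergence. Hence $X=\Sone\cap\TT(\theta)$ with the trace norm is a Banach space.

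Suppose, for contradiction, that every operator in $X$ has a bounded symbol. Let $N$ be the annihilator of the symbol map, i.e. the set of $\varphi\in L^\infty$ with $A_\varphi^\theta=0$. Sarason showed that $N=\theta H^2+\overline{\theta H^2}$ intersected with $L^\infty$, and $N$ is weak-$*$ closed in $L^\infty$. Consider the space $Y=\{\varphi\in L^\infty: A_\varphi^\theta\in X\}$ with norm $\norm{\varphi}_\infty+\norm{A_\varphi^\theta}_{\Sone}$. The map $\varphi\mapsto A^\theta_\varphi$ is bounded from $L^\infty$ into the bounded operators, so $Y$ is complete: a Cauchy sequence converges in $L^\infty$, its images converge in $\Sone$, and the two limits agree as operators. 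The map $Y\to X$, $\varphi\mapsto A_\varphi^\theta$, is then bounded, linear and, by assumption, onto. The open mapping theorem gives a constant $C$ such that every $A\in X$ has a symbol $\varphi$ with $\norm{\varphi}_\infty\le C\norm{A}_{\Sone}$. Thus $\beta_\theta(A)\le C\norm{A}_{\Sone}$ on $X$. Applying this to $A=R_{a_n}^\theta$ gives $\beta_\theta(R_{a_n}^\theta)\le C$, contradicting Corollary~\ref{cor:divergence}. Therefore some $A\in X$ has no bounded symbol.

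The main obstacle is the closedness of $X$ in $\Sone$, which is needed to make the open mapping theorem applicable. I rely on the weak closedness of $\TT(\theta)$, citing Sarason. If an explicit operator is preferred, one can instead take $A=\sum_n c_n R_{a_n}^\theta$ with $\sum\abs{c_n}<\infty$ chosen along a subsequence. The non-constructive Baire-category argument above avoids the delicate task of controlling symbol cancellations in such a sum.
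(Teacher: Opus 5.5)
Your argument is correct and is essentially the paper's. Both make $\TT(\theta)\cap\Sone$ a Banach space in trace norm, use a Baire-category theorem under the contrary assumption to get $\beta_\theta(A)\le C\norm{A}_{\Sone}$, and contradict this with the trace-norm-one operators $R_{a_n}^\theta$. The only difference is bookkeeping: the paper applies the closed graph theorem to $A\mapsto J_\theta^{-1}A$ into the quotient $L^\infty/\ker Q_\theta$, while you apply the open mapping theorem to the symbol map on your graph-normed space $Y$. Your description of $\ker Q_\theta$ via Sarason is not needed, and the ``gliding-hump'' framing in your first line does not describe the argument you actually give.
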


Other rank-one truncated Toeplitz operators without bounded symbols,
and hence trace-class examples, were already obtained in
\cite[Theorem~5.3]{BCFMT}.
Corollary~\ref{cor:traceclass} is a functional-analytic consequence
of Corollary~\ref{cor:divergence}; its additional content is the
conclusion for every non-one-component $\theta$.

\subsection{Consequences}\label{sec:consequences}

The following corollaries combine Theorem~\ref{thm:main} with the embedding,
duality, and symbol-representation results cited in their proofs.
In each case, the new ingredient is the necessary implication in
Theorem~\ref{thm:main}.

\subsubsection*{Embeddings and weak factorization}
For an inner function $\theta$ and $1\le p<\infty$, put
$K_\theta^p=H^p\cap \theta\overline{zH^p}$. Let $\mathcal C_p(\theta)$ be the class of
finite complex Borel measures $\mu$ on $\T$ for which
\[
 \norm f_{L^p(|\mu|)}\le C_\mu\norm f_{H^p},
 \qquad f\in K_\theta^p\cap C(\overline\D).
\]
The boundary-value convention is recalled in
Section~\ref{sec:application-identities}. Let $K_\theta\odot K_\theta$
be the set of all $h\in H^1$ admitting a representation
$h=\sum_j f_jg_j$, where $f_j,g_j\in K_\theta$ and
$\sum_j\norm{f_j}_2\norm{g_j}_2<\infty$; the series then converges in
$H^1$. Equip this space with the norm
$\norm h_\odot$ given by the infimum of these sums over all such
representations. Put
\[
 Y_\theta=H^1\cap\theta^2\overline{z^2H^1}.
\]

\begin{corollary}[Embeddings and weak factorization]
\label{cor:embedding-factorization}
The following conditions are equivalent:
\begin{enumerate}
\item $\theta$ is one-component;
\item $\mathcal C_1(\theta^2)=\mathcal C_2(\theta^2)$;
\item $K_\theta\odot K_\theta=Y_\theta$.
\end{enumerate}
When they hold, there is a constant $F_\theta<\infty$ such that
\[
 \norm h_1\le\norm h_\odot\le F_\theta\norm h_1,
 \qquad h\in Y_\theta.
\]
\end{corollary}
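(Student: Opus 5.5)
The plan is to deduce Corollary~\ref{cor:embedding-factorization} from Theorem~\ref{thm:main} together with Theorem~2.4 of \cite{BBK}, and not to attack the embeddings directly. That theorem states that, for any inner function $\theta$, three properties are equivalent: every operator in $\TT(\theta)$ has a bounded symbol, $\mathcal C_1(\theta^2)=\mathcal C_2(\theta^2)$, and $K_\theta\odot K_\theta=Y_\theta$. Granting this, the chain \textup{(i)}$\Leftrightarrow$\textup{(ii)}$\Leftrightarrow$\textup{(iii)} follows at once. Condition \textup{(i)} of the corollary is equivalent to condition \textup{(ii)} of Theorem~\ref{thm:main}. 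The latter is equivalent to each of the two remaining conditions by \cite{BBK}.

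The first step is to check that the objects defined here agree with those in \cite{BBK}. This means checking the definition of $K^p_\theta$, the boundary-value convention for $f\in K_\theta^p\cap C(\overline\D)$ in $\mathcal C_p$, and the normalization of $Y_\theta=H^1\cap\theta^2\overline{z^2H^1}$. It also means checking the norm on $K_\theta\odot K_\theta$. A useful consistency check is the duality behind the equivalence. The space $\TT(\theta)$ is the dual of the predual spanned by products $f\bar g$, and these correspond to $K_\theta\odot K_\theta\subset Y_\theta$. Every operator has a bounded symbol exactly when the restriction map $(Y_\theta)^*\to(K_\theta\odot K_\theta)^*$ is onto. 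By the closed range theorem, this holds exactly when the two spaces coincide with equivalent norms. This duality is what yields the norm statement.

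For the norm inequality, the bound $\norm{h}_1\le\norm h_\odot$ is immediate from Cauchy--Schwarz: $\norm{fg}_1\le\norm f_2\norm g_2$, summed over any representation. For the reverse bound, when the sets coincide, the identity map from $(Y_\theta,\norm{\cdot}_1)$ to $(K_\theta\odot K_\theta,\norm\cdot_\odot)$ is bijective. Both spaces are Banach spaces: $Y_\theta$ is closed in $H^1$, and the projective-type quotient norm is complete. The inverse map is continuous, so the open mapping theorem gives the constant $F_\theta$.

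The main obstacle is the bookkeeping in matching conventions with \cite{BBK}. Their statement may be phrased with $\theta^2$ or with $z$-shifts normalized differently. If the equivalence with \textup{(iii)} is not stated there verbatim, I would recover it from the duality sketched above. The genuinely new input is only the implication \textup{(ii)}$\Rightarrow$\textup{(i)} of Theorem~\ref{thm:main}.
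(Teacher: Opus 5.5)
Your proposal is correct and matches the paper's proof: Theorem~\ref{thm:main} identifies (i) with the bounded-symbol property, and \cite[Theorem~2.4]{BBK} supplies the equivalence with (ii) and (iii) together with the norm estimate, so your open-mapping derivation of $F_\theta$ is valid but redundant. The one loose point is your fallback duality sketch: surjectivity of the restriction map only makes $K_\theta\odot K_\theta$ closed in $Y_\theta$ with equivalent norms, so equality would also need density, but your main route does not rely on that sketch.
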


\begin{proof}
Combine Theorem~\ref{thm:main} with the equivalences and norm estimates
in \cite[Theorem~2.4]{BBK}.
\end{proof}

\subsubsection*{Compact symbol representations}
For $\varphi\in L^2(\T)$, we use the truncated Hankel convention
\[
 \Gamma_\varphi^\theta f=P_{\bar\theta K_\theta}(\varphi f),
 \qquad f\in K_\theta\cap H^\infty,
\]
where $P_{\bar\theta K_\theta}$ is the orthogonal projection in $L^2(\T)$
onto $\bar\theta K_\theta$. Write $\TT_c(\theta)$ for the compact
operators in $\TT(\theta)$. Using Hankel operators on Hardy space, Ma and Zheng \cite{MaZheng,MaZhengFiniteRank} characterized compact truncated Toeplitz operators with bounded symbols and finite-rank truncated Toeplitz operators.

The Toeplitz--Hankel correspondence and the norm identities used below
are stated in Lemma~\ref{pre:hankel-symbols}.

\begin{corollary}[Compact symbol representations]
\label{cor:compact-symbols}
The following conditions are equivalent:
\begin{enumerate}
\item $\theta$ is one-component;
\item each $A\in\TT_c(\theta)$ has the form
      $A=A_{\theta c}^\theta$ with $c\in C(\T)$;
\item each compact truncated Hankel operator on $K_\theta$ has the
      form $\Gamma_c^\theta$ with $c\in C(\T)$.
\end{enumerate}
\end{corollary}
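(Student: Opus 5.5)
The plan is to reduce Corollary~\ref{cor:compact-symbols} to Theorem~\ref{thm:main} through the Toeplitz--Hankel correspondence of Lemma~\ref{pre:hankel-symbols}, a Nehari-type duality, and the open mapping theorem. Multiplication by $\theta$ maps $\bar\theta K_\theta$ unitarily onto $K_\theta$. Hence $\theta\Gamma_\varphi^\theta=A_{\theta\varphi}^\theta$, and truncated Hankel operators correspond exactly to truncated Toeplitz operators via $\varphi\leftrightarrow\theta\varphi$. Under this correspondence compactness and norms are preserved. This gives (ii)$\Leftrightarrow$(iii) directly: $A=A_{\theta c}^\theta$ with $c\in C(\T)$ is the same as $\theta^{-1}A=\Gamma_c^\theta$. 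So it suffices to prove (i)$\Leftrightarrow$(ii).

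For (i)$\Rightarrow$(ii) I will use the standard predual picture. $\TT(\theta)$ is the dual of a quotient of $L^1$, so the compact truncated Toeplitz operators $\TT_c(\theta)$ form a predual-type space. If $\theta$ is one-component, Theorem~\ref{thm:main} together with the open mapping theorem gives a constant $C_\theta$ with $\beta_\theta(A)\le C_\theta\norm A$ for every $A\in\TT(\theta)$. The symbol map $L^\infty\to\TT(\theta)$ is then onto, which dualizes to an isomorphism of the trace-class predual $\TT(\theta)_*$ with a subspace of $L^1$ modulo annihilators.

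Running the same duality one level down, I will show that the map $C(\T)\to\TT_c(\theta)$, $c\mapsto A_{\theta c}^\theta$, is onto. The steps are as follows.
\begin{enumerate}
\item The map lands in $\TT_c(\theta)$, since $A_{\theta c}^\theta$ is compact for continuous $c$ by the Ma--Zheng criterion: $\theta c$ lies in $\theta C(\T)\subset H^\infty+\theta\,\overline{\text{stuff}}+C$.
\item Its adjoint is essentially the map $\TT_c(\theta)^*=\TT(\theta)_*\to M(\T)$.
\item Surjectivity of the original map is equivalent to the adjoint being bounded below.
\item Bounded-belowness is equivalent, by weak-$*$ density and a Hahn--Banach argument, to the estimate $\beta_\theta(A)\le C\norm{A}$ on $\TT(\theta)$ after multiplying symbols by $\bar\theta$.
\end{enumerate}

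For (ii)$\Rightarrow$(i), suppose $\theta$ is not one-component. Corollary~\ref{cor:divergence} gives norm-one rank-one, hence compact, operators $R_{a_n}^\theta$ with $\beta_\theta(R_{a_n}^\theta)\to\infty$. If (ii) held, the continuous linear surjection $c\mapsto A_{\theta c}^\theta$ from $C(\T)$ onto the Banach space $\TT_c(\theta)$ would, by the open mapping theorem, give $c_n$ with $A_{\theta c_n}^\theta=R_{a_n}^\theta$ and $\norm{c_n}_\infty\le C$. Then $\varphi_n=\theta c_n$ represents $R_{a_n}^\theta$ with $\norm{\varphi_n}_\infty\le C$, so $\beta_\theta(R_{a_n}^\theta)\le C$, a contradiction.

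The one point needing care here is that $\TT_c(\theta)$ is closed in operator norm. This holds because $\TT(\theta)$ is weak-operator closed (Sarason) and the compact operators are norm closed.

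The main obstacle is the duality step in (i)$\Rightarrow$(ii): making the identification $\TT_c(\theta)^*\cong\TT(\theta)_*$ precise, and showing that bounded $L^\infty$ symbols with norm control yield continuous symbols for compact operators. This is the passage from Nehari to Hartman. I would try to cite this equivalence from the existing literature, namely the symbol-representation results recalled in Lemma~\ref{pre:hankel-symbols} and \cite{BBK}, which state it conditionally on the bounded-symbol property, rather than reprove it.
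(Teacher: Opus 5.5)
Your reduction (ii)$\Leftrightarrow$(iii) via Lemma~\ref{pre:hankel-symbols} is exactly the paper's. Your proof of (ii)$\Rightarrow$(i) is correct and takes a different, self-contained route. The open mapping theorem, applied to $c\mapsto A^\theta_{\theta c}$ from $C(\T)$ onto the Banach space $\TT_c(\theta)$, gives a uniform bound on $\beta_\theta$ over norm-one compact operators, which contradicts Corollary~\ref{cor:divergence}. If you have not yet proved step~(1), apply the theorem instead on the closed subspace of those $c$ with $A^\theta_{\theta c}$ compact. In effect this is (ii)$\Rightarrow$ condition~(iii) of Theorem~\ref{thm:main}. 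The paper instead obtains both directions of (i)$\Leftrightarrow$(ii) at once, by combining Theorem~\ref{thm:main} with \cite[Theorem~3.2]{OLoughlin}.

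The gap is (i)$\Rightarrow$(ii), which you do not actually prove. Steps (2)--(4) are only asserted, and step~(1) rests on an unclear containment. The fallback citations also do not contain the result. Lemma~\ref{pre:hankel-symbols} gives only the Toeplitz--Hankel correspondence and quotient-norm identities, and \cite[Theorem~2.4, Corollary~2.5]{BBK} concern bounded symbols of bounded operators, not continuous symbols of compact ones. The paper uses \cite[Theorem~3.2]{OLoughlin} here, and mentions \cite[Theorem~2]{BessonovCompact} as an alternative.

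Your duality does close, and the identification $\TT_c(\theta)^*\cong\TT(\theta)_*$ you flag as the main obstacle is not needed.
\begin{enumerate}
\item $A^\theta_{\theta z^n}=0$ for $n\ge0$. Writing $f=p+z^mf_1$ with $\deg p<m$ shows that $A^\theta_{\theta\bar z^m}$ has range in $\operatorname{span}\{P_\theta(\theta\bar z^j):1\le j\le m\}$. By density of trigonometric polynomials and \eqref{pre:contractive-symbol-map}, $Sc=A^\theta_{\theta c}$ maps $C(\T)$ boundedly into $\TT_c(\theta)$.
\item By Hahn--Banach and trace duality, every $\ell\in\TT_c(\theta)^*$ has the form $\ell(A)=\sum_j\ip{Af_j}{g_j}$ with $\sum_j\norm{f_j}_2\norm{g_j}_2<\infty$. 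Hence $\ell(A_\varphi^\theta)=\int_\T\varphi w\,dm$ for every bounded $\varphi$, where $w=\sum_jf_j\overline{g_j}\in L^1$. In particular $\norm{S^*\ell}_{M(\T)}=\norm{\theta w}_1=\norm w_1$.
\item Under (i), Lemma~\ref{pre:uniform} gives every $A\in\TT(\theta)$ a symbol of norm at most $2B_\theta\norm A$, so
\[
 \norm\ell=\sup_{A\in\TT_c(\theta),\ \norm A\le1}\abs{\ell(A)}
 \le 2B_\theta\norm w_1=2B_\theta\norm{S^*\ell}.
\]
\item Thus $S^*$ is bounded below, and the closed range theorem makes $S$ surjective onto $\TT_c(\theta)$, which is (ii).
\end{enumerate}
With this argument written out, your proof would be complete and independent of \cite{OLoughlin}.
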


\begin{proof}
The equivalence of (i) and (ii) follows from
Theorem~\ref{thm:main} and \cite[Theorem~3.2]{OLoughlin}. The equivalence
of (ii) and (iii) follows from Lemma~\ref{pre:hankel-symbols}, whose
unitary correspondence preserves compactness.
\end{proof}

The implication (i)$\Rightarrow$(ii) also follows from Bessonov's
continuous-symbol theorem \cite[Theorem~2]{BessonovCompact}, together
with Lemma~\ref{pre:hankel-symbols}. The symbols in
condition~(ii) belong to $\theta C(\T)$. By
\cite[Lemma~3.5]{BessonovCompact},
$\{A_{\theta c}^\theta:c\in C(\T)\}$ is always dense in $\TT_c(\theta)$;
for non-one-component $\theta$ it is therefore a proper dense subspace.

\subsubsection*{The Nehari-type estimate}
Set
\[
 \mathcal N_\theta=L^\infty(\T)\cap
       \bigl(H^2+\overline{\theta^2H^2}\bigr),
 \qquad
 h_a(\zeta)=\frac{1}{D_\theta(a)(\zeta-a)}\quad(\zeta\in\T).
\]
Then $h_a\in C(\T)$. Lemma~\ref{pre:hankel-symbols} identifies
$\mathcal N_\theta$ as the relevant symbol kernel and gives the
quotient-norm identities used below.

\begin{corollary}[A geometric criterion for the Nehari-type estimate]
\label{cor:nehari-criterion}
The following conditions are equivalent:
\begin{enumerate}
\item $\theta$ is one-component;
\item there is $C_\theta<\infty$ such that
\[
 \dist_{L^\infty}(\varphi,\mathcal N_\theta)
       \le C_\theta\norm{\Gamma_\varphi^\theta},
       \qquad \varphi\in L^\infty(\T);
\]
\item the estimate in~(ii) holds for all $\varphi\in C(\T)$;
\item $\displaystyle\sup_{a\in\D}
          \dist_{L^\infty}(h_a,\mathcal N_\theta)<\infty$.
\end{enumerate}
If $\theta$ is not one-component, there are $a_n\in\D$ with
$t_n=-\log\abs{\theta(a_n)}\to0$ such that
\[
 \norm{\Gamma_{h_{a_n}}^\theta}=1,
 \qquad
 \dist_{L^\infty}(h_{a_n},\mathcal N_\theta)
       \ge\frac{c}{t_n\log(e/t_n)}\longrightarrow\infty.
\]
\end{corollary}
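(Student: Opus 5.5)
The plan is to reduce everything to Theorem~\ref{thm:main}, Corollary~\ref{cor:divergence}, Theorem~\ref{thm:quantitative}, and the quotient-norm identities of Lemma~\ref{pre:hankel-symbols}. I assume that this lemma supplies a unitary correspondence $U$ and a constant $C_0$ such that $\norm{\Gamma_\varphi^\theta}=\norm{A^\theta_{\theta\varphi}}$, and such that $\dist_{L^\infty}(\varphi,\mathcal N_\theta)=\beta_\theta(A^\theta_{\theta\varphi})$ up to the factor $C_0$. The kernel claim is natural: $A^\theta_{\theta\psi}=0$ exactly when $\theta\psi\in\theta\overline{\theta H^2}+\ldots$, and the Sarason kernel $\theta H^2+\overline{\theta H^2}$ multiplied by $\bar\theta$ gives $H^2+\overline{\theta^2H^2}$. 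I also assume it shows that $\Gamma^\theta_{h_a}$ corresponds to $R_a^\theta$, i.e.\ $A^\theta_{\theta h_a}=R_a^\theta$ up to the unitary. A direct check of this last point: for $f\in K_\theta$, $P_{\bar\theta K_\theta}(f/(\zeta-a))$ is computed from $\bar\theta\,\widetilde k_a^\theta\cdot f(a)$-type terms, which yields a rank-one operator $f\mapsto \ip{f}{k_a^\theta}\bar\theta\widetilde k_a^\theta/D_\theta(a)$. Hence $\norm{\Gamma^\theta_{h_a}}=\norm{R_a^\theta}=1$.

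The equivalences then follow in this order.
\begin{enumerate}
\item (i)$\Rightarrow$(ii). By Theorem~\ref{thm:main}, every $A\in\TT(\theta)$ has a bounded symbol. The open mapping theorem, applied to the surjection $L^\infty\to\TT(\theta)$, $\psi\mapsto A^\theta_\psi$, gives $\beta_\theta(A)\le C\norm A$. Apply this to $A=A^\theta_{\theta\varphi}$ and translate via the lemma.
\item (ii)$\Rightarrow$(iii). This is trivial.
\item (iii)$\Rightarrow$(iv). Apply (iii) to $h_a\in C(\T)$, using $\norm{\Gamma^\theta_{h_a}}=1$.
\item (iv)$\Rightarrow$(i). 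By the identification, (iv) says $\sup_a\beta_\theta(R_a^\theta)<\infty$. This is condition~(iii) of Theorem~\ref{thm:main}, hence $\theta$ is one-component.
\end{enumerate}

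For the final quantitative statement, I take the points $a_n$ from Corollary~\ref{cor:divergence}. For these, $t_n\to0$ and $q_\theta(a_n)\to0$, so eventually $t_n\le\tfrac12$ and $q_\theta(a_n)\le\tfrac12$. Theorem~\ref{thm:quantitative} then gives $\beta_\theta(R^\theta_{a_n})\ge c/(t_n\log(e/t_n))$, and the distance identity transfers this bound, adjusting $c$ by the lemma's constant if it is not exact. Discarding finitely many initial terms handles the early indices.

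The main obstacle is the exact identification $\dist_{L^\infty}(h_a,\mathcal N_\theta)=\beta_\theta(R_a^\theta)$. This requires that $\psi\mapsto\theta\psi$ maps the coset $h_a+\mathcal N_\theta$ isometrically onto the set of all $L^\infty$ symbols of $R_a^\theta$. To verify it, I would use that $A^\theta_\chi=0$ if and only if $\chi\in\theta H^2+\overline{\theta H^2}$ (Sarason), and that $\abs\theta=1$ on $\T$ makes multiplication by $\bar\theta$ an $L^\infty$ isometry. I would also have to check that $\theta h_a$ is indeed a symbol for $R_a^\theta$. This amounts to the kernel computation above, via $P_\theta(\theta f/(\zeta-a))=P_\theta\bigl(f\,\widetilde k^\theta_a\bigr)$-type identities on $\T$, which I would confirm by testing against reproducing kernels.
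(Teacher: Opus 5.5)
Your proposal is correct and follows essentially the same route as the paper. It proves (i)$\Rightarrow$(ii) through the open-mapping bound of Lemma~\ref{pre:uniform} applied to $A^\theta_{\theta\varphi}$, takes the two middle implications as immediate, and gets (iv)$\Rightarrow$(i) and the quantitative bound from the identities of Lemma~\ref{pre:hankel-symbols} together with Corollary~\ref{cor:divergence} and Theorem~\ref{thm:quantitative}. The only correction is that Lemma~\ref{pre:hankel-symbols} gives $\dist_{L^\infty}(\varphi,\mathcal N_\theta)=\beta_\theta(A^\theta_{\theta\varphi})$ and $A^\theta_{\theta h_a}=R_a^\theta$ exactly, so no constant $C_0$ or adjustment of $c$ is needed; your own sketch via Sarason's kernel $\theta H^2+\overline{\theta H^2}$ and the isometry $\psi\mapsto\theta\psi$ already yields that exact identity.
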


\begin{proof}
Suppose first that (i) holds. Lemma~\ref{pre:uniform} gives
$\beta_\theta(A)\le B_\theta\norm A$ for every
$A\in\TT(\theta)$. Apply this to $A_{\theta\varphi}^\theta$ and use
Lemma~\ref{pre:hankel-symbols}; this is precisely (ii). Clearly
(ii)$\Rightarrow$(iii), and (iii)$\Rightarrow$(iv) follows by taking
$\varphi=h_a$, which is continuous. Conversely, the last identities in
Lemma~\ref{pre:hankel-symbols} identify (iv) with condition~(iii) of
Theorem~\ref{thm:main}; hence (iv)$\Rightarrow$(i). This proves the
equivalences. The quantitative assertion follows from
Corollary~\ref{cor:divergence} and Theorem~\ref{thm:quantitative}.
The implication \textup{(i)}$\Rightarrow$\textup{(ii)} also follows from \cite[Lemma~3.6]{BessonovCompact}.
\end{proof}

\subsubsection*{Clark measures and Bessonov's duality theorem}
For $\alpha\in\T$, let $\sigma_\alpha$ be the Clark measure of $\theta$ \cite{Clark},
characterized by
\[
 \Rea\frac{\alpha+\theta(z)}{\alpha-\theta(z)}
 =\int_\T\frac{1-|z|^2}{|\xi-z|^2}\,d\sigma_\alpha(\xi).
\]
For a finite positive measure $\mu$, write $a(\mu)$ for its isolated atoms and
$\rho(\mu)=\supp\mu\setminus a(\mu)$. Bessonov's Clark-measure
criterion \cite[Theorem~1]{BessonovDuality} consists of the following
conditions:
\begin{enumerate}[label=\textup{(C\arabic*)}]
\item $\mu$ is discrete, all its atoms are isolated, $m(\supp\mu)=0$,
every atom $\xi$ has adjacent atoms $\xi_-,\xi_+$ on its two sides,
and every component of $\T\setminus\rho(\mu)$ contains atoms;
\item for constants $0<A\le B<\infty$ depending on $\mu$,
$A|\xi-\xi_\pm|\le\mu\{\xi\}\le B|\xi-\xi_\pm|$ at every atom;
\item $\displaystyle
\sup_{\zeta\in a(\mu)}\left|
\int_{\T\setminus\{\zeta\}}\frac{d\mu(\xi)}{1-\bar\xi\zeta}
\right|<\infty$.
\end{enumerate}
\begin{corollary}[Clark-measure criterion]\label{cor:clark-symbols}
Suppose that $\theta$ is not a finite Blaschke product. Every operator
in $\TT(\theta)$ has a bounded symbol if and only if one Clark measure
of $\theta$ satisfies \textup{(C1)--(C3)}. Equivalently, every Clark
measure satisfies these conditions.
\end{corollary}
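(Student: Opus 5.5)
The plan is to obtain Corollary~\ref{cor:clark-symbols} by combining Theorem~\ref{thm:main} with Bessonov's duality theorem. By Theorem~\ref{thm:main}, every operator in $\TT(\theta)$ has a bounded symbol exactly when $\theta$ is one-component. So the corollary reduces to one claim: for $\theta$ not a finite Blaschke product, $\theta$ is one-component if and only if one (equivalently, every) Clark measure $\sigma_\alpha$ satisfies (C1)--(C3).

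I expect \cite[Theorem~1]{BessonovDuality} to be stated in one of two forms.
\begin{enumerate}
\item A characterization of measures $\mu$ for which the boundedness of the relevant truncated Toeplitz or Hankel-type forms on $K_\theta$ (equivalently, the duality $(K_\theta\odot K_\theta)^*$ or the embedding equality $\mathcal C_1(\theta^2)=\mathcal C_2(\theta^2)$) is encoded by the atoms.
\item Directly: $\theta$ with Clark measure $\mu=\sigma_\alpha$ has the bounded-symbol/weak-factorization property if and only if $\mu$ satisfies (C1)--(C3).
\end{enumerate}
In either case, the first step is to match Bessonov's property with one of the equivalent conditions already available: condition (ii) of Theorem~\ref{thm:main}, or (ii)/(iii) of Corollary~\ref{cor:embedding-factorization}, which are equivalent to it via \cite[Theorem~2.4]{BBK}. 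This identification is the step I expect to need the most care. Bessonov's theorem is formulated for a fixed Clark measure and possibly through a different duality pairing, such as $K_\theta\odot K_\theta$ against truncated Hankel or Toeplitz forms. I would check that his hypothesis (excluding finite Blaschke products, where Clark measures have finitely many atoms and (C1) fails trivially) matches ours. I would also check that his conclusion is literally one of the properties in Corollary~\ref{cor:embedding-factorization}, using Lemma~\ref{pre:hankel-symbols} to pass between Toeplitz and Hankel formulations if needed.

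With the identification made, the two directions are immediate. If one $\sigma_\alpha$ satisfies (C1)--(C3), Bessonov gives the property, so $\theta$ has bounded symbols. Conversely, if every operator has a bounded symbol, Bessonov's theorem (necessity part) gives (C1)--(C3) for each $\sigma_\alpha$.

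The "equivalently, every Clark measure" clause then follows because the property in Theorem~\ref{thm:main}(ii) does not depend on $\alpha$. As a consistency check, Theorem~\ref{thm:main}(i) is also invariant: replacing $\theta$ by $\bar\alpha\theta$ changes neither $\Omega_\theta(r)$ nor $\TT(\theta)$. So one Clark measure satisfies the conditions if and only if the operator property holds, if and only if all of them do.

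The new input relative to \cite{BessonovDuality} is only the necessity direction of Theorem~\ref{thm:main}. It converts Bessonov's operator-theoretic criterion into the geometric one-component statement.
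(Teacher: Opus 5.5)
Your proposal is correct and follows the paper's route: Theorem~\ref{thm:main} reduces the bounded-symbol property to the one-component property, and \cite[Theorem~1]{BessonovDuality}, applied to each fixed $\sigma_\alpha$, states that a $\theta$ which is not a finite Blaschke product is one-component if and only if $\sigma_\alpha$ satisfies (C1)--(C3), so the matching step you worry about is empty and $\alpha$-independence gives the ``every'' clause. One correction: your closing remark inverts the roles, since Bessonov's criterion is the geometric one and the new necessity direction of Theorem~\ref{thm:main} turns it into the operator-theoretic statement (under your hypothetical form~(2) the main theorem would not be needed at all); also, as formulated here, (C1)--(C3) hold trivially for finite Blaschke products of degree at least two rather than fail, and the paper excludes that case only because it is automatic.
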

\begin{proof}
For each fixed $\alpha\in\T$, \cite[Theorem~1]{BessonovDuality}, applied
to $\mu=\sigma_\alpha$, identifies conditions \textup{(C1)--(C3)} with
the one-component property. The conclusion follows from
Theorem~\ref{thm:main}.
\end{proof}
We state Corollary~\ref{cor:clark-symbols} only for non-finite Blaschke products, since
the finite-dimensional case is automatic and may be treated
separately.
Moreover,
\[
zY_\theta=K_{\theta^2}^1\cap zH^1.
\]
Hence Corollary~\ref{cor:embedding-factorization} shows that the
weak-factorization property used in Bessonov's duality theory holds
exactly when $\theta$ is one-component; see
\cite[Sections~4.3--4.4]{BessonovDuality}.

\subsection{Proof outline and organization}
The proof of necessity begins with the following application of the
open mapping theorem. If
every bounded truncated Toeplitz operator has a bounded symbol, then the
open mapping theorem gives a constant $B_\theta<\infty$ such that
\[
 \beta_\theta(A)\le B_\theta\norm A,
 \qquad A\in\TT(\theta).
\]
If $\theta$ is not one-component, the geometric selection lemma produces
points $a_n$ such that
\[
 |\theta(a_n)|\longrightarrow1,
 \qquad q_\theta(a_n)\longrightarrow0.
\]
Theorem~\ref{thm:quantitative} then gives
$\beta_\theta(R_{a_n}^\theta)\to\infty$, whereas
$\norm{R_{a_n}^\theta}=1$. It therefore remains to prove the geometric selection lemma and the
quantitative lower bound in Theorem~\ref{thm:quantitative}.

By Lemma~\ref{pre:unitary}, it suffices to prove
Theorem~\ref{thm:quantitative} at $a=0$. After multiplying $\theta$ by
a unimodular constant, we may assume $\theta(0)=e^{-t}>0$ and
$q_\theta(0)\le1/2$. For zeros
$a_j=e^{-s_j}\xi_j$, $\xi_j\in\T$, and singular measure $\mu$, consider
\[
 \nu=\sum_j\sinh(s_j)\delta_{\xi_j}+\mu.
\]
Its mass is comparable to $t$, while the derivative hypothesis bounds
its first angular moment by a fixed fraction of its mass. Two separated
angular regions therefore carry masses comparable to $t$. They yield
inner factors $\phi,\psi$ of $\theta$. Writing $\theta=\phi\psi\chi$,
the factor $\chi$ contains the remaining zeros and the remaining part
of the singular measure. The functions
\[
 f=1-\phi(0)\phi,\qquad g=1-\psi(0)\psi
\]
belong to $K_\theta$ and satisfy
\[
 f(0)g(0)\ge c t^2,\qquad
 \norm{fg}_1\le C t^2\log(e/t).
\]
The lower bound in Lemma~\ref{pre:pair}, applied to $f$ and $g$, gives
\eqref{intro:quantitative}. The factor estimates use absolute
summability away from the two angular supports and a precise $L^1$
kernel identity.

The required geometric statement is a continuation lemma:
a positive lower
bound for $q_F$ on $\{\abs F>\rho\}$ forces all higher sublevel sets
of a holomorphic self-map $F$ of $\D$ to be connected. We prove this using a
continuous retraction obtained by continuing local inverse branches along
radial segments. In related
geometric terminology, one-component inner functions are characterized
by covering behavior over an outer annulus
\cite[Theorem~7.2]{IU}. The present proof does not use that
characterization or require boundary continuation of the inner function.

Section~\ref{sec:prelim} records the operator identities and
normalizations, including the open mapping argument.
Section~\ref{sec:geometry} proves the geometric
selection lemma, and Section~\ref{sec:factors} develops the factor
estimates, including the infinite-product and singular-measure limits.
Section~\ref{sec:proof} completes the proofs of the main results.
Section~\ref{sec:examples} proves the sharpness of the quantitative
order by an exact computation for a degree-two family.

\section{Preliminaries and symbol norms}\label{sec:prelim}

\subsection{Operator identities and normalization}

We retain the notation and conventions introduced in
Section~\ref{sec:intro} and identify Hardy functions with their radial
boundary functions. The domain $K_\theta\cap H^\infty$ in
\eqref{intro:tto} is dense: it contains every reproducing kernel, whose
linear span is dense in $K_\theta$. If $u$ is an inner divisor of
$\theta$, then $K_u\subset K_\theta$. For $\varphi\in L^\infty(\T)$,
\begin{equation}\label{pre:contractive-symbol-map}
  \norm{A_\varphi^\theta}\le \norm{\varphi}_\infty.
\end{equation}

By Sarason's theorem \cite[Theorem~4.2]{Sarason}, $\TT(\theta)$ is
closed in the weak operator topology, hence in the operator norm.
It is therefore a Banach space with the operator norm. In particular,
\begin{equation}\label{pre:symbol-map}
 Q_\theta:L^\infty(\T,m)\longrightarrow\TT(\theta),
 \qquad Q_\theta\varphi=A_\varphi^\theta,
\end{equation}
is a bounded linear map between Banach spaces, with $\norm{Q_\theta}\le1$.

The tensor convention in Section~\ref{sec:intro} implies
$(\alpha u)\otimes(\beta v)=\alpha\overline\beta\,(u\otimes v)$.
The canonical conjugation of $K_\theta$ is
\[
  (C_\theta f)(\zeta)=\theta(\zeta)\overline\zeta\,
                     \overline{f(\zeta)},\qquad \zeta\in\T.
\]
It is an antilinear isometric involution of $K_\theta$, and
\begin{equation}\label{pre:conjugation-identities}
  \ip{C_\theta f}{C_\theta g}=\ip{g}{f},
  \qquad \abs{C_\theta f}=\abs f\quad\text{a.e. on }\T.
\end{equation}
These facts follow, for example, from
$K_\theta=H^2\cap\theta\overline{zH^2}$.
In particular, $C_\theta$ preserves $K_\theta\cap H^\infty$.

For the kernels introduced in \eqref{intro:rankone},
\[
 \ip{f}{k_\lambda^\theta}=f(\lambda),\qquad
 C_\theta k_\lambda^\theta=\widetilde k_\lambda^\theta,
 \qquad \norm{k_\lambda^\theta}_2^2=D_\theta(\lambda).
\]
Since $\theta$ is nonconstant, $D_\theta(\lambda)>0$. Define the
unnormalized rank-one operator
\begin{equation}\label{pre:rank-one-definitions}
  T_\lambda^\theta
    =\widetilde k_\lambda^\theta\otimes k_\lambda^\theta,
  \qquad T_\lambda^\theta=D_\theta(\lambda)R_\lambda^\theta.
\end{equation}
Thus $\norm{T_\lambda^\theta}=D_\theta(\lambda)$ and
$\norm{R_\lambda^\theta}=1$.

By \eqref{intro:beta} and \eqref{pre:contractive-symbol-map},
$\beta_\theta(A)\ge\norm A$; also
$\beta_\theta(cA)=\abs c\,\beta_\theta(A)$ for $c\ne0$.
In particular, for $\lambda\in\D$,
\[
  \beta_\theta(R_\lambda^\theta)
       =\frac{\beta_\theta(T_\lambda^\theta)}{D_\theta(\lambda)}.
\]

\begin{lemma}[Uniform bounded-symbol constant]\label{pre:uniform}
The following statements are equivalent:
\begin{enumerate}
\item Every $A\in\TT(\theta)$ has a symbol in $L^\infty(\T,m)$.
\item There is a finite constant $B_\theta$ such that
\[
  \beta_\theta(A)\le B_\theta\norm A,
  \qquad A\in\TT(\theta).
\]
\end{enumerate}
\end{lemma}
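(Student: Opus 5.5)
The plan is the standard open mapping argument applied to the symbol map $Q_\theta$ of \eqref{pre:symbol-map}. The implication (ii)$\Rightarrow$(i) is immediate. If $\beta_\theta(A)\le B_\theta\norm A<\infty$, then the infimum in \eqref{intro:beta} is taken over a nonempty set, so some $\varphi\in L^\infty(\T)$ satisfies $A_\varphi^\theta=A$.

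For (i)$\Rightarrow$(ii), I would use that $\TT(\theta)$ is a Banach space under the operator norm, by Sarason's weak-operator closedness theorem as recorded above. I would also use that $Q_\theta:L^\infty(\T,m)\to\TT(\theta)$ is bounded and linear with $\norm{Q_\theta}\le1$, by \eqref{pre:contractive-symbol-map}. Condition (i) says exactly that $Q_\theta$ is surjective. By the open mapping theorem, $Q_\theta$ is then open. Hence there is $\delta>0$ such that the open unit ball of $L^\infty$ maps onto a set containing the ball $\{A\in\TT(\theta):\norm A<\delta\}$.

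Now fix $A\ne0$ and rescale to $\delta A/(2\norm A)$. This operator has a symbol $\psi$ with $\norm{\psi}_\infty<1$. Then $\varphi=(2\norm A/\delta)\psi$ is a symbol for $A$, and $\norm\varphi_\infty<2\norm A/\delta$. So (ii) holds with $B_\theta=2/\delta$. The case $A=0$ is trivial, since $\beta_\theta(0)=0$ using $\varphi=0$.

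Equivalently, $Q_\theta$ induces a bounded bijection $L^\infty/\ker Q_\theta\to\TT(\theta)$, and $\beta_\theta(A)$ is precisely the quotient norm of the preimage of $A$. The inverse mapping theorem then bounds this quotient norm by $\norm{\widetilde Q_\theta^{-1}}\,\norm A$.

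The only point needing care is completeness of $\TT(\theta)$, since the open mapping theorem requires the target to be a Banach space. I take this from the cited closedness result, together with the fact that $\TT(\theta)$ is a linear subspace of the bounded operators on $K_\theta$. The map $Q_\theta$ is well defined into $\TT(\theta)$ because every bounded symbol gives a bounded truncated Toeplitz operator. Beyond this there is no real obstacle.
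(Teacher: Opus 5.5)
Your proof is correct and is essentially the paper's argument. The paper proves (i)$\Rightarrow$(ii) by applying the bounded inverse theorem to the induced bijection $J_\theta\colon L^\infty/\ker Q_\theta\to\TT(\theta)$ and identifying $\beta_\theta(A)$ with the quotient norm, which is exactly your alternative formulation; (ii)$\Rightarrow$(i) is handled identically.
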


\begin{proof}
The kernel of $Q_\theta$ in \eqref{pre:symbol-map} is closed, so the
quotient $Z_\theta=L^\infty/\ker Q_\theta$ is a Banach space. The
induced map
\[
 J_\theta:Z_\theta\longrightarrow\TT(\theta),
 \qquad J_\theta[\varphi]=A_\varphi^\theta,
\]
is bounded and injective. Its quotient norm satisfies
\[
 \norm{[\varphi]}_{Z_\theta}
 =\inf_{\psi\in\ker Q_\theta}\norm{\varphi+\psi}_\infty
 =\beta_\theta(A_\varphi^\theta),
\]
since two bounded symbols define the same operator precisely when
their difference belongs to $\ker Q_\theta$.

Under condition~(i), $Q_\theta$ is surjective, and hence $J_\theta$ is
a bounded bijection between Banach spaces. The open mapping theorem,
in its bounded inverse form, gives $B_\theta=\norm{J_\theta^{-1}}<\infty$.
Consequently,
\[
 \beta_\theta(A)=\norm{J_\theta^{-1}A}_{Z_\theta}
 \le B_\theta\norm A,\qquad A\in\TT(\theta).
\]

Conversely, finiteness of the infimum in
\eqref{intro:beta} implies that the set over which it is
taken is nonempty.
\end{proof}

We recall Sarason's rank-one symbol formula
\cite[Theorem~5.1(a) and its proof]{Sarason}: for every $\lambda\in\D$,
\begin{equation}\label{pre:rank-one-symbol}
  A_{\theta/(z-\lambda)}^\theta=T_\lambda^\theta.
\end{equation}
Here the symbol is its boundary function on $\T$, and is bounded.
In particular,
\begin{equation}\label{pre:origin-symbol}
  A_{\theta\bar z}^\theta=T_0^\theta.
\end{equation}
The reproducing property and the canonical conjugation give, for
all $f,g\in K_\theta$, the identity
\begin{equation}\label{pre:bilinear-pairing}
  \ip{T_\lambda^\theta f}{C_\theta g}
       =f(\lambda)g(\lambda).
\end{equation}
\begin{lemma}[A lower bound for the symbol norm]\label{pre:pair}
For every $\lambda\in\D$ and nonzero $f,g\in K_\theta$,
\begin{equation}\label{pre:symbol-norm-bound}
  \beta_\theta(R_\lambda^\theta)
  \ge
  \frac{\abs{f(\lambda)g(\lambda)}}
       {D_\theta(\lambda)\displaystyle\int_\T\abs{fg}\,dm}.
\end{equation}
\end{lemma}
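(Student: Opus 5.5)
The plan is to compute $\ip{A_\varphi^\theta f}{C_\theta g}$ for an arbitrary bounded symbol $\varphi$ of $T_\lambda^\theta$ in two ways: once via the pairing identity \eqref{pre:bilinear-pairing}, and once directly as an integral over $\T$. Comparing the two expressions bounds $\abs{f(\lambda)g(\lambda)}$ by $\norm{\varphi}_\infty\norm{fg}_1$. If $\beta_\theta(R_\lambda^\theta)=\infty$ there is nothing to prove. Otherwise, by the scaling identity $\beta_\theta(R_\lambda^\theta)=\beta_\theta(T_\lambda^\theta)/D_\theta(\lambda)$, it suffices to show
\[
\abs{f(\lambda)g(\lambda)}\le \norm{\varphi}_\infty\int_\T\abs{fg}\,dm
\]
for every $\varphi\in L^\infty(\T)$ with $A_\varphi^\theta=T_\lambda^\theta$, and then take the infimum over $\varphi$. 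If $\int_\T\abs{fg}\,dm=0$, then $fg=0$ a.e. on $\T$, so $fg\equiv0$ and the right side of \eqref{pre:symbol-norm-bound} is interpreted accordingly; this degenerate case is handled separately and trivially.

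First take $f,g\in K_\theta\cap H^\infty$, so that $A_\varphi^\theta f=P_\theta(\varphi f)$ literally, and $C_\theta g\in K_\theta$. Since $C_\theta g\in K_\theta$ and $P_\theta$ is self-adjoint,
\[
\ip{A_\varphi^\theta f}{C_\theta g}=\ip{\varphi f}{C_\theta g}=\int_\T \varphi f\,\overline{\theta\bar\zeta\,\bar g}\,dm=\int_\T\varphi\,\bar\theta\,\zeta\, fg\,dm .
\]
Since $\abs{\bar\theta\zeta}=1$ on $\T$, the modulus of this is at most $\norm\varphi_\infty\norm{fg}_1$. On the other hand, by \eqref{pre:bilinear-pairing} the same quantity equals $f(\lambda)g(\lambda)$, which gives the desired inequality.

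For general $f,g\in K_\theta$ I would approximate. Choose $f_n,g_n\in K_\theta\cap H^\infty$, for instance finite combinations of reproducing kernels, converging in $L^2$ to $f,g$. Then $f_n(\lambda)\to f(\lambda)$ and $g_n(\lambda)\to g(\lambda)$ by continuity of point evaluation. Also $f_ng_n\to fg$ in $L^1$ by the Cauchy--Schwarz inequality. Passing to the limit in the inequality for $f_n,g_n$ gives it for $f,g$. Alternatively, one can extend the integral identity directly: $A_\varphi^\theta$ is bounded with bounded symbol, so the identity $\ip{A_\varphi^\theta f}{C_\theta g}=\int\varphi\bar\theta\zeta fg\,dm$ holds for all $f,g\in K_\theta$ by continuity. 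The only mildly delicate point is this density step; the rest is just the computation above.
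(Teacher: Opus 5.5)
Your proof is correct and matches the paper's: pair a bounded symbol with $C_\theta g$, use $\ip{T_\lambda^\theta f}{C_\theta g}=f(\lambda)g(\lambda)$, and bound the integral by $\norm{\varphi}_\infty\int_\T\abs{fg}\,dm$. Your density step is harmless but unnecessary, since $A_\varphi^\theta=P_\theta M_\varphi|_{K_\theta}$ on all of $K_\theta$ when $\varphi$ is bounded; and the ``degenerate case'' never arises, because for nonzero $f,g\in K_\theta$ the product $fg$ is a nonzero $H^1$ function whose boundary values cannot vanish on a set of positive measure, which is exactly how the paper justifies dividing by $\int_\T\abs{fg}\,dm$.
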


\begin{proof}
Let $\varphi\in L^\infty(\T)$ be any symbol for $R_\lambda^\theta$.
By \eqref{pre:bilinear-pairing} and
$\abs{C_\theta g}=\abs g$ almost everywhere,
\[
 \frac{\abs{f(\lambda)g(\lambda)}}{D_\theta(\lambda)}
 =\abs{\ip{A_\varphi^\theta f}{C_\theta g}}
 =\left|\int_\T\varphi f\,\overline{C_\theta g}\,dm\right|
 \le\norm{\varphi}_\infty\int_\T\abs{fg}\,dm.
\]
The integral is finite and positive for nonzero $f,g\in K_\theta$:
finiteness follows from Cauchy--Schwarz, while positivity follows from
the Hardy-space uniqueness theorem, since the nonzero analytic function
$fg$ cannot have boundary values equal to zero on a set of positive
measure.
Dividing by it and taking the infimum over all such $\varphi$ proves
\eqref{pre:symbol-norm-bound}.
\end{proof}

\begin{lemma}[Automorphisms and exact preservation of symbol norms]
\label{pre:unitary}
Fix $a\in\D$, and put
\[
  s_a=\sqrt{1-\abs a^2},\qquad
  \tau_a(z)=\frac{a-z}{1-\overline a z},\qquad
  w_a(z)=\frac{s_a}{1-\overline a z},\qquad
  \eta=\theta\circ\tau_a.
\]
The weighted composition operator
\[
  (U_a h)(z)=w_a(z)h(\tau_a(z))
\]
is a self-adjoint unitary on $L^2(\T,m)$ and maps $K_\theta$ onto
$K_\eta$. For $\varphi\in L^\infty(\T,m)$,
\begin{equation}\label{pre:symbol-conjugation}
  U_aA_\varphi^\theta U_a^*=A_{\varphi\circ\tau_a}^\eta,
  \qquad
  \norm{\varphi\circ\tau_a}_\infty=\norm\varphi_\infty.
\end{equation}
More generally, conjugation by $U_a$ maps $\TT(\theta)$ onto
$\TT(\eta)$, and
\begin{equation}\label{pre:symbol-norm-invariance}
  \beta_\eta(U_aAU_a^*)=\beta_\theta(A),
  \qquad A\in\TT(\theta).
\end{equation}
For the kernels based at $a$,
\begin{equation}\label{pre:kernel-recentering}
  U_a k_a^\theta=s_a^{-1}k_0^\eta,
  \qquad
  U_a\widetilde k_a^\theta=-s_a^{-1}\widetilde k_0^\eta.
\end{equation}
Consequently,
\begin{equation}\label{pre:normalized-recentering}
  U_aR_a^\theta U_a^*=-R_0^\eta,
  \qquad
  \beta_\theta(R_a^\theta)=\beta_\eta(R_0^\eta).
\end{equation}
The hyperbolic derivative of $\eta$ at the origin is
\begin{equation}\label{pre:q-recentering}
 q_\eta(0)=q_\theta(a).
\end{equation}
\end{lemma}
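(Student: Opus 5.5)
The plan is to verify the claims in the order stated, relying on the standard properties of the disc automorphism $\tau_a$. It is an involution, $\tau_a\circ\tau_a=\mathrm{id}$, and it maps $\T$ onto $\T$. Its boundary derivative satisfies $\abs{\tau_a'(\zeta)}=s_a^2/\abs{1-\bar a\zeta}^2=\abs{w_a(\zeta)}^2$ on $\T$. The change of variables $\zeta\mapsto\tau_a(\zeta)$ on $\T$ then gives $\int_\T\abs{w_a}^2\abs{h\circ\tau_a}^2\,dm=\int_\T\abs h^2\,dm$, so $U_a$ is an isometry of $L^2(\T,m)$. For $U_a^2=I$, the key identity is
\[
w_a(z)\,w_a(\tau_a(z))=1,
\]
which follows from $1-\bar a\tau_a(z)=s_a^2/(1-\bar az)$. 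Hence $U_a$ is an isometric involution, so it is unitary with $U_a^{-1}=U_a$. Since $U_a^*=U_a^{-1}=U_a$, it is also self-adjoint.

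To show $U_aK_\theta=K_\eta$, I will use $K_\theta=H^2\cap\theta\overline{zH^2}$. The operator $U_a$ preserves $H^2$, because $w_a\in H^\infty$ and composition with $\tau_a$ preserves $H^2$. For the conjugate part, suppose $h=\theta\bar z\bar u$ with $u\in H^2$. On $\T$ we have $\overline{\tau_a(\zeta)}=1/\tau_a(\zeta)$ and $\overline{w_a}=w_a\cdot\overline{(\cdots)}$ up to a unimodular factor. Writing $\overline{w_a(\zeta)}=s_a\zeta/(\zeta-a)$, the product $w_a\cdot(\theta\circ\tau_a)\cdot\overline{\tau_a}\cdot\overline{u\circ\tau_a}$ should regroup as $\eta\,\bar\zeta\,\overline{v}$ with $v=\pm w_a\,(u\circ\tau_a)\in H^2$. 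This is a short computation, and applying it with $a$ replaced by the involution gives equality rather than inclusion. The same computation shows $U_a(\theta H^2)=\eta H^2$. Hence $U_aP_\theta U_a=P_\eta$.

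The symbol formula follows from a multiplication identity. Since $U_a(\varphi h)=(\varphi\circ\tau_a)U_ah$, we get $U_aM_\varphi U_a=M_{\varphi\circ\tau_a}$. Therefore
\[
U_aP_\theta M_\varphi U_a=P_\eta M_{\varphi\circ\tau_a}
\]
on the dense domain, and it extends by boundedness. Composition with $\tau_a$ preserves null sets on $\T$, so it preserves $L^\infty$ norms. For a general $A\in\TT(\theta)$ I will avoid working with $L^2$ symbols. Conjugation by $U_a$ is an isometric bijection $\mathcal B(K_\theta)\to\mathcal B(K_\eta)$. By the symmetric roles of $\theta$ and $\eta$, it maps $\TT(\theta)$ onto $\TT(\eta)$. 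Handling this cleanly through a symbol representation, $\varphi\in L^2$ with $\varphi\circ\tau_a\in L^2$ only after weighting, is the delicate point; if needed I will instead invoke Sarason's characterization of $\TT(\theta)$ by the shifted-invariance property, or the fact that $\TT(\theta)$ is the weak-closure of bounded-symbol operators, which is preserved by conjugation. Since $\varphi\mapsto\varphi\circ\tau_a$ is an isometric bijection of $L^\infty$ carrying symbols of $A$ to symbols of $U_aAU_a^*$, taking infima gives the equality of $\beta_\theta(A)$ and $\beta_\eta(U_aAU_a^*)$.

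For the kernels, I will compute directly. Using $\tau_a(a)=0$, we have $\eta(0)=\theta(a)$, and $1-\bar a\tau_a(z)=s_a^2/(1-\bar az)$ gives $U_ak_a^\theta=s_a^{-1}k_0^\eta$. Next, $\tau_a(z)-a=-s_a^2z/(1-\bar az)$, which gives $U_a\widetilde k_a^\theta=-s_a^{-1}\widetilde k_0^\eta$. Applying the tensor convention with $U_a^*=U_a$, we get
\[
U_a(u\otimes v)U_a=(U_au)\otimes(U_av),
\]
so $U_aT_a^\theta U_a=-s_a^{-2}T_0^\eta$. Since $D_\eta(0)=1-\abs{\theta(a)}^2=s_a^2D_\theta(a)$, this yields $U_aR_a^\theta U_a^*=-R_0^\eta$. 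Then the invariance of $\beta$ under conjugation together with $\beta(-A)=\beta(A)$ gives the stated equality of symbol norms. Finally, the chain rule gives $\eta'(0)=\theta'(a)\tau_a'(0)=-\theta'(a)(1-\abs a^2)$, which yields $q_\eta(0)=q_\theta(a)$. I expect the only real obstacle to be the surjectivity onto $\TT(\eta)$ for operators without bounded symbols.
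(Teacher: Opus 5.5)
Your treatment of unitarity, of $U_aK_\theta=K_\eta$, of the bounded-symbol conjugation identity, of the kernel formulas and tensor identity, of the normalization $D_\eta(0)=s_a^2D_\theta(a)$, and of $q_\eta(0)$ is correct and essentially the paper's argument. Your regrouping does work, with $v=-U_au$. There is one real gap. You leave open the claim that conjugation by $U_a$ maps $\TT(\theta)$ onto $\TT(\eta)$ for operators with no bounded symbol.

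The phrase ``by the symmetric roles of $\theta$ and $\eta$'' only upgrades an inclusion $U_a\TT(\theta)U_a^*\subset\TT(\eta)$ to equality. It does not supply the inclusion. Your fallbacks are not ready-made either. Conjugation sends $A_z^\theta$ to $A_{\tau_a}^\eta$, not to the compressed shift of $K_\eta$, so Sarason's shift criterion does not transfer verbatim. Weak-operator density of bounded-symbol operators in $\TT(\theta)$ would be a literature citation considerably heavier than the lemma. Your bijection-of-bounded-symbols argument for \eqref{pre:symbol-norm-invariance} is sound for every bounded $A$, including when both symbol sets are empty. What is missing is only the membership $U_aAU_a^*\in\TT(\eta)$.

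The obstacle you anticipated is not real. The weight in $U_a$ acts on the function, not on the symbol. For fixed $a$ one has $\abs{\tau_a'}\le(1+\abs a)/(1-\abs a)$ on $\T$, so the change of variables gives $\norm{\varphi\circ\tau_a}_2^2\le\frac{1+\abs a}{1-\abs a}\norm{\varphi}_2^2$ for every $\varphi\in L^2$. Since $w_a$ is bounded, $U_a$ is an involution, and $\eta\circ\tau_a=\theta$, the map $U_a$ takes $K_\theta\cap H^\infty$ onto $K_\eta\cap H^\infty$.

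Now take $f\in K_\eta\cap H^\infty$ and put $h=U_af\in K_\theta\cap H^\infty$. Then $\varphi h\in L^2$ and $U_a(\varphi h)=(\varphi\circ\tau_a)f$. Since $U_aP_\theta=P_\eta U_a$ on $L^2$, you get $U_aA_\varphi^\theta U_a f=A_{\varphi\circ\tau_a}^\eta f$ on the defining domain. Hence if $A_\varphi^\theta$ is bounded, so is $A_{\varphi\circ\tau_a}^\eta$, and its bounded extension is $U_aAU_a^*$. This is exactly how the paper closes the step, and your symmetry remark then gives surjectivity.
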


\begin{proof}
The identities
\[
  \tau_a\circ\tau_a=\operatorname{id},\qquad
  w_a(z)w_a(\tau_a(z))=1,
\]
together with
\[
  \abs{\tau_a'(\zeta)}
    =\frac{1-\abs a^2}{\abs{1-\overline a\zeta}^2}
    =\abs{w_a(\zeta)}^2,
    \qquad \zeta\in\T,
\]
show by change of variables that $U_a$ is unitary and $U_a^2=I$.
Thus $U_a^*=U_a$. It maps $H^2$ onto itself and
$\theta H^2$ onto $\eta H^2$, and therefore maps $K_\theta$ onto
$K_\eta$. The assertion about $H^2$ can be checked first on analytic
polynomials: their weighted compositions are bounded analytic functions;
unitarity and density then give invariance of $H^2$, and $U_a^2=I$
gives surjectivity. In particular,
\[
  U_aP_\theta U_a^*=P_\eta,
  \qquad
  U_aM_\varphi U_a^*=M_{\varphi\circ\tau_a}.
\]
This proves the operator identity in
\eqref{pre:symbol-conjugation}. The boundary automorphism and its
inverse preserve null sets, so the essential supremum is unchanged.

For completeness, the same conjugation identity is valid for an
$L^2$ symbol on the defining dense domains. For fixed $a$, the change
of variables has a bounded Jacobian and bounded inverse Jacobian;
hence $\varphi\circ\tau_a\in L^2$ whenever $\varphi\in L^2$.
The analytic weights involved in $U_a$ and $U_a^{-1}$ are bounded,
so $U_a$ maps $K_\theta\cap H^\infty$ onto
$K_\eta\cap H^\infty$. The projection and multiplication identities
therefore apply on these domains. Passing to bounded extensions
shows that conjugation maps $\TT(\theta)$ onto
$\TT(\eta)$. Applying \eqref{pre:symbol-conjugation} in both
directions gives \eqref{pre:symbol-norm-invariance}, including when the
symbol norm is infinite.

The elementary identity
\[
  1-\overline a\tau_a(z)
    =\frac{s_a^2}{1-\overline a z}
\]
gives the first formula in \eqref{pre:kernel-recentering}. Similarly,
\[
  \tau_a(z)-a
    =-\frac{s_a^2z}{1-\overline a z}
\]
gives the second formula directly from the divided difference
definition. Thus
\[
  U_aT_a^\theta U_a^*=-s_a^{-2}T_0^\eta,
  \qquad
  D_\theta(a)=s_a^{-2}D_\eta(0).
\]
Dividing proves the normalized identity, whose sign is exactly
$-1$. Equation \eqref{pre:symbol-norm-invariance} and homogeneity then prove
the equality of the infimal bounded-symbol norms. Finally,
$\tau_a'(0)=-(1-\abs a^2)$ and $\eta(0)=\theta(a)$ give
\[
 q_\eta(0)=\frac{\abs{\eta'(0)}}{1-\abs{\eta(0)}^2}
 =\frac{(1-\abs a^2)\abs{\theta'(a)}}{1-\abs{\theta(a)}^2}
 =q_\theta(a),
\]
which proves \eqref{pre:q-recentering}.
\end{proof}

\begin{remark}[Unimodular rotations]\label{pre:rotation}
If $\eta^\sharp=c\eta$ with $\abs c=1$, then $K_{\eta^\sharp}=K_\eta$ and
$A_\varphi^{\eta^\sharp}=A_\varphi^\eta$ for every symbol $\varphi$.
Moreover,
\[
  C_{\eta^\sharp}=cC_\eta,\qquad
  k_\lambda^{\eta^\sharp}=k_\lambda^\eta,\qquad
  \widetilde k_\lambda^{\eta^\sharp}=c\widetilde k_\lambda^\eta,
  \qquad R_\lambda^{\eta^\sharp}=cR_\lambda^\eta.
\]
Hence the infimal bounded-symbol norms are unchanged. After applying
Lemma~\ref{pre:unitary} and replacing $\eta$ by
$\eta^\sharp=c\eta$, the precise
operator identity becomes
\[
  U_aR_a^\theta U_a^*=-\overline c\,R_0^{\eta^\sharp}.
\]
Thus one may arrange $\eta^\sharp(0)>0$ whenever $\eta(0)\ne0$, without
altering any operator norm or infimal bounded-symbol norm.
\end{remark}

\subsection{Boundary conventions and the Toeplitz--Hankel correspondence}
\label{sec:application-identities}
We record the conventions and calculations used in
Section~\ref{sec:consequences}. If $\mu\in\mathcal C_p(\theta)$, the
inclusion $K_\theta^p\cap C(\overline\D)\to L^p(|\mu|)$ extends continuously
to $K_\theta^p$. By Aleksandrov's theorem, recalled in
\cite[Theorem~1.1]{BBK}, this extension agrees $|\mu|$-almost everywhere
with angular boundary values. It is therefore independent of the
choice of an $m$-almost everywhere representative, even when $\mu$ is
singular with respect to $m$.
For the weak product, the Cauchy--Schwarz inequality gives convergence
in $H^1$, and
\[
 fg=\theta^2\bar z^{\,2}\overline{(C_\theta f)(C_\theta g)}
 \qquad(f,g\in K_\theta)
\]
shows that $K_\theta\odot K_\theta\subset Y_\theta$.

Several related notions of truncated Hankel operator occur in the
literature. Gu and Ma \cite{GuMa2024} characterized boundedness, finite
rank, and compactness for compressions of Hardy-space Hankel operators
to model spaces, with the standard linear flip.
Ma, Yan and Zheng \cite{MaYanZhengBigHankel} treated vanishing,
finite rank and compactness of the big truncated Hankel operator
$H_\varphi^\theta=(I-P_\theta)M_\varphi|_{K_\theta}$,
with range in $L^2(\T)\ominus K_\theta$, for $\varphi\in L^\infty(\T)$.

Our convention for $\Gamma_\varphi^\theta$ follows Bessonov. Since
$\bar\theta K_\theta=\overline{zK_\theta}\subset L^2\ominus H^2$,
the definitions give
$\Gamma_\varphi^\theta=P_{\bar\theta K_\theta}H_\varphi^\theta$.

\begin{lemma}[Hankel symbols and quotient norms]\label{pre:hankel-symbols}
Multiplication by $\bar\theta$ defines a unitary map from $K_\theta$
onto $\bar\theta K_\theta$, and
\begin{equation}\label{conseq:hankel-toeplitz}
 \Gamma_\varphi^\theta=M_{\bar\theta}A_{\theta\varphi}^\theta.
\end{equation}
The kernel of $\varphi\mapsto\Gamma_\varphi^\theta$ on $L^\infty$ is
$\mathcal N_\theta$. For every $\varphi\in L^\infty$,
\[
 \dist_{L^\infty}(\varphi,\mathcal N_\theta)
 =\beta_\theta(A_{\theta\varphi}^\theta),\qquad
 \norm{\Gamma_\varphi^\theta}=\norm{A_{\theta\varphi}^\theta}.
\]
For the functions $h_a$ defined in Section~\ref{sec:consequences},
\[
 A_{\theta h_a}^\theta=R_a^\theta,\qquad
 \norm{\Gamma_{h_a}^\theta}=1,\qquad
 \dist_{L^\infty}(h_a,\mathcal N_\theta)=\beta_\theta(R_a^\theta).
\]
\end{lemma}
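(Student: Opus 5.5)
We verify the four assertions of Lemma~\ref{pre:hankel-symbols} in turn.

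\textbf{Unitarity and the identity \eqref{conseq:hankel-toeplitz}.} Multiplication by $\bar\theta$ is an isometry of $L^2(\T)$, since $\abs\theta=1$ a.e. It maps $K_\theta$ onto $\bar\theta K_\theta$ by definition, so it is unitary between these subspaces. It intertwines the projections:
\[
 P_{\bar\theta K_\theta}=M_{\bar\theta}P_\theta M_\theta .
\]
Indeed, $M_{\bar\theta}$ is unitary on $L^2$ and carries $K_\theta$ onto $\bar\theta K_\theta$. Hence, for $f\in K_\theta\cap H^\infty$,
\[
 \Gamma_\varphi^\theta f=M_{\bar\theta}P_\theta(\theta\varphi f)=M_{\bar\theta}A_{\theta\varphi}^\theta f .
\]
This holds on the dense domain and then for the bounded extensions, which proves \eqref{conseq:hankel-toeplitz}.

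\textbf{Norm identity and kernel.} Since $M_{\bar\theta}$ is isometric on $K_\theta$,
\[
 \norm{\Gamma_\varphi^\theta}=\norm{A_{\theta\varphi}^\theta}.
\]
In particular, $\Gamma_\varphi^\theta=0$ if and only if $A_{\theta\varphi}^\theta=0$. Sarason's description of the zero truncated Toeplitz operators \cite{Sarason} states that $A_\psi^\theta=0$ exactly when $\psi\in\theta H^2+\overline{\theta H^2}$. Take $\psi=\theta\varphi$ and divide by $\theta$ (allowed since $\abs\theta=1$). This gives
\[
 \varphi\in H^2+\bar\theta\,\overline{\theta H^2}=H^2+\overline{\theta^2H^2}.
\]
Intersecting with $L^\infty$ identifies the kernel as $\mathcal N_\theta$.

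\textbf{Distance identity.} Multiplication by $\theta$ is an isometric bijection of $L^\infty$. By the kernel description, it maps $\mathcal N_\theta$ onto $\ker Q_\theta=\{\psi\in L^\infty:A_\psi^\theta=0\}$. Therefore
\[
 \dist_{L^\infty}(\varphi,\mathcal N_\theta)
 =\inf_{\psi\in\ker Q_\theta}\norm{\theta\varphi+\psi}_\infty .
\]
By the quotient-norm computation in the proof of Lemma~\ref{pre:uniform}, the right-hand side equals $\beta_\theta(A_{\theta\varphi}^\theta)$.

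\textbf{The functions $h_a$.} On $\T$ we have
\[
 \theta h_a=\frac{\theta}{D_\theta(a)(z-a)} .
\]
By Sarason's formula \eqref{pre:rank-one-symbol}, $A_{\theta/(z-a)}^\theta=T_a^\theta$. By linearity in the symbol and \eqref{pre:rank-one-definitions},
\[
 A_{\theta h_a}^\theta=T_a^\theta/D_\theta(a)=R_a^\theta .
\]
Since $\norm{R_a^\theta}=1$, the norm identity gives $\norm{\Gamma_{h_a}^\theta}=1$. The distance identity gives $\dist_{L^\infty}(h_a,\mathcal N_\theta)=\beta_\theta(R_a^\theta)$. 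Finally, $h_a$ is continuous on $\T$ because $\abs a<1$.

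\textbf{Main obstacle.} The only step that is not pure bookkeeping is the kernel identification. It rests on Sarason's theorem for $L^2$ symbols, which we need only for bounded symbols. The one point to check is that dividing by $\theta$ carries $\theta H^2+\overline{\theta H^2}$ exactly onto $H^2+\overline{\theta^2H^2}$.
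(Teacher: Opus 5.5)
Your proposal is correct and follows the paper's proof essentially step for step: the intertwining identity $P_{\bar\theta K_\theta}=M_{\bar\theta}P_\theta M_\theta$, the transfer of the quotient norm through the isometric bijection $M_\theta$ of $L^\infty$, and Sarason's rank-one symbol formula for $h_a$. The only difference is the kernel step. The paper cites Bessonov's Lemma~3.4, whereas you derive it from Sarason's zero-symbol theorem by dividing $\theta H^2+\overline{\theta H^2}$ by $\theta$; this derivation is valid and makes that step self-contained.
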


\begin{proof}
The identity
$P_{\bar\theta K_\theta}=M_{\bar\theta}P_\theta M_\theta$
gives \eqref{conseq:hankel-toeplitz} on $K_\theta\cap H^\infty$,
and hence for the bounded extensions. This is the correspondence
in \cite[Lemma~3.3]{BessonovCompact}; left multiplication by this
unitary preserves norms, compactness and singular values.
The kernel description is \cite[Lemma~3.4]{BessonovCompact}, and
in particular $\mathcal N_\theta$ is norm closed.

Multiplication of boundary symbols by $\theta$ is an isometric
bijection of $L^\infty$. By the correspondence, a bounded symbol
$\theta\psi$ represents $A_{\theta\varphi}^\theta$ precisely when
$\psi-\varphi\in\mathcal N_\theta$. Taking the infimum over these
symbols proves the distance identity. Finally,
\eqref{pre:rank-one-symbol} gives
$A_{\theta h_a}^\theta=R_a^\theta$, and $\norm{R_a^\theta}=1$
gives the remaining assertions.
\end{proof}

\section{A geometric consequence of a lower bound for the hyperbolic derivative}
\label{sec:geometry}

For a holomorphic map $F:\D\to\D$, set
\[
 q_F(z)=\frac{(1-\abs{z}^2)\abs{F'(z)}}{1-\abs{F(z)}^2},
 \qquad z\in\D.
\]
We use the hyperbolic length element
$\abs{dz}/(1-\abs{z}^2)$ on $\D$.  The geometric argument below does
not require $F$ to be inner.

Corollary~\ref{geom:sequence} shows that every non-one-component inner
function admits a sequence satisfying the hypotheses of
Theorem~\ref{thm:quantitative}, with $t$ tending to zero. This is the
only conclusion of the section used in the proof of the main theorem;
the proof of Theorem~\ref{thm:quantitative} is independent of this
section.

\begin{lemma}\label{geom:retract}
Let $F:\D\to\D$ be holomorphic.  Suppose that there are constants
$0<\rho<1$ and $\delta>0$ such that
\begin{equation}\label{geom:lower-bound}
 q_F(z)\geq\delta
 \qquad\text{whenever }\abs{F(z)}>\rho.
\end{equation}
For each $R\in(\rho,1)$, the set
\[
 S_R=\{z\in\D:\abs{F(z)}\leq R\}
\]
is a continuous retract of $\D$.  In particular, $S_R$ is nonempty
and path connected.  Consequently, for every $r\in(\rho,1)$, the
open sublevel set
\[
 \Omega_r=\{z\in\D:\abs{F(z)}<r\}
\]
is nonempty and path connected.
\end{lemma}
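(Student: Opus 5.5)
The plan is to build an explicit retraction $\pi_R:\D\to S_R$ by lifting radial segments in the target disk through local inverse branches of $F$. Fix $R\in(\rho,1)$ and take $z$ with $\abs{F(z)}\ge R$. Put $w=F(z)$ and $\omega=w/\abs w$. Consider the radial path
\[
\gamma_w(s)=\bigl((1-s)\abs w+sR\bigr)\omega,\qquad s\in[0,1],
\]
running from $w$ to $R\omega$ inside $\{\rho<\abs\zeta<1\}$. By \eqref{geom:lower-bound}, $F'\neq0$ wherever $\abs F>\rho$, so $F$ is a local biholomorphism there. Starting at $z$, we continue the local inverse branch of $F$ along $\gamma_w$, obtaining a lift $\widetilde\gamma_z$ with $F\circ\widetilde\gamma_z=\gamma_w$. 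We then set $\pi_R(z)=\widetilde\gamma_z(1)$, and $\pi_R(z)=z$ on $S_R$. When $\abs{F(z)}=R$ the path $\gamma_w$ is constant, so the two definitions agree on the overlap $\{\abs F=R\}$.

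The key step is that the continuation never breaks down. Along any partial lift, $\abs{F}>\rho$, so \eqref{geom:lower-bound} gives
\[
\frac{\abs{d\widetilde\gamma}}{1-\abs{\widetilde\gamma}^2}
=\frac{\abs{d\gamma}}{q_F(\widetilde\gamma)\,(1-\abs{\gamma}^2)}
\le\frac{1}{\delta}\,\frac{\abs{d\gamma}}{1-\abs\gamma^2}.
\]
The hyperbolic length of the lift is therefore at most $\delta^{-1}\int_R^{\abs w}dx/(1-x^2)<\infty$, because $\abs w<1$. Hence the lift stays in a fixed compact hyperbolic ball around $z$. On that compact set $\abs{F'}$ is bounded below, and $\abs F\ge R>\rho$ holds along the path since $\abs{\gamma_w}$ decreases from $\abs w$ to $R$. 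The inverse-function theorem then gives a uniform radius of local invertibility there, so the standard maximal-continuation argument extends the lift to all of $[0,1]$. Uniqueness follows because $F$ is a local homeomorphism on $\{\abs F>\rho\}$.

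Next comes continuity of $\pi_R$, which I expect to be the main obstacle. On the closed set $\{\abs F\ge R\}$ I would argue as follows. For $z'$ near $z$, the path $\gamma_{F(z')}$ is uniformly close to $\gamma_{F(z)}$. Cover the compact lift $\widetilde\gamma_z([0,1])$ by finitely many neighbourhoods on which $F$ is biholomorphic onto a disk of uniform radius. Composing the corresponding inverse branches shows that $\widetilde\gamma_{z'}$ stays uniformly close to $\widetilde\gamma_z$, and in particular $\pi_R(z')\to\pi_R(z)$. On $S_R$ the map is the identity. Since both pieces are closed and the definitions agree on their intersection, the pasting lemma gives continuity on $\D$. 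Also $\abs{F(\pi_R(z))}=R$ when $\abs{F(z)}\ge R$, so $\pi_R(\D)\subset S_R$ and $\pi_R$ is a retraction.

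The consequences follow quickly. $S_R=\pi_R(\D)$ is nonempty and is a continuous image of the path connected set $\D$, so it is path connected. For $r\in(\rho,1)$ we write $\Omega_r=\bigcup_{\rho<R<r}S_R$, which is an increasing union of nonempty path connected sets. Any two points lie in a common $S_R$, so $\Omega_r$ is nonempty and path connected.
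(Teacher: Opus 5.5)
Your proposal is correct and follows essentially the paper's route. You lift radial segments through local inverse branches, use $q_F\ge\delta$ to bound the hyperbolic length of the lift by $\delta^{-1}(\operatorname{arctanh}\abs{w}-\operatorname{arctanh}R)$ so the continuation cannot break down, and use a finite chain of inverse branches for continuity. The differences are cosmetic: you finish the continuation with a uniform inverse radius instead of a Cauchy limit, and you treat $\{\abs{F}=R\}$ by the pasting lemma on the closed sets $\{\abs{F}\le R\}$ and $\{\abs{F}\ge R\}$ instead of the paper's displacement bound $\abs{P_R(z)-z}\le\delta^{-1}(\operatorname{arctanh}\abs{F(z)}-\operatorname{arctanh}R)_+$.

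One wording fix: $\abs{F'}$ need not be bounded below on the whole hyperbolic ball, which may contain critical points in $\{\abs{F}\le\rho\}$. It is bounded below on the compact intersection of that ball with $\{\abs{F}\ge R\}$, which is where the lift lives, and that is all the uniform-radius argument needs.
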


\begin{proof}
Fix $R\in(\rho,1)$, and let $d_h$ be the distance induced by
$|dz|/(1-|z|^2)$. We use
$d_h(0,w)=\operatorname{arctanh}|w|$ and $|v-w|\le d_h(v,w)$.
If $F$ is constant, the hypothesis forces $|F|\le\rho$, and all
the asserted sublevel sets equal $\D$. Otherwise, the assumed
lower bound implies that $F'$ has no zeros in $\{\abs{F}>\rho\}$.
For $z\in\D$ with
$a=\abs{F(z)}>R$, put $u=F(z)/a$ and consider the radial segment
\begin{equation}\label{geom:radial-path}
 \gamma_z(t)=\bigl(a-(a-R)t\bigr)u,\qquad 0\leq t\leq1.
\end{equation}
Since $F'(z)\ne0$, the inverse function theorem gives a lift along
an initial part of this segment. Two partial lifts starting at $z$
agree on their common interval: the set of times at which they
agree is nonempty, closed by continuity, and open by local
injectivity of $F$. Let $T\in(0,1]$ be the supremum of the numbers
$b\in(0,1]$ for which a lift exists on $[0,b]$.
The partial lifts are compatible by uniqueness and therefore define a
lift $\eta_z$ on $[0,T)$, with
\[
 F(\eta_z(t))=\gamma_z(t),\qquad \eta_z(0)=z.
\]
As long as the continuation exists, differentiation gives
\begin{align}
 \frac{\abs{\eta_z'(t)}}{1-\abs{\eta_z(t)}^2}
 &=\frac{\abs{\gamma_z'(t)}}
 {q_F(\eta_z(t))(1-\abs{\gamma_z(t)}^2)}\notag\\
 &\leq\frac1\delta
 \frac{\abs{\gamma_z'(t)}}{1-\abs{\gamma_z(t)}^2}.
 \label{geom:speed}
\end{align}
Here $\abs{F(\eta_z(t))}\geq R>\rho$, so the lower bound for
$q_F$ applies.  The right-hand side is integrable on $[0,1]$, and
\begin{equation}\label{geom:length}
 \frac1\delta\int_0^1
 \frac{\abs{\gamma_z'(t)}}{1-\abs{\gamma_z(t)}^2}\,dt
 =\frac1\delta\bigl(\operatorname{arctanh}a
                    -\operatorname{arctanh}R\bigr)<\infty.
\end{equation}
Denote the finite bound in \eqref{geom:length} by $\Lambda_z$.
For $0\le t<T$,
\[
 \operatorname{arctanh}|\eta_z(t)|
 =d_h(0,\eta_z(t))
 \le\operatorname{arctanh}|z|+\Lambda_z.
\]
Thus the lift stays in a fixed compact subdisk of $\D$.
For $0\le t_1<t_2<T$, \eqref{geom:speed} bounds
$d_h(\eta_z(t_1),\eta_z(t_2))$ by the corresponding integral over
$[t_1,t_2]$. Since the integrand is integrable on $[0,1]$, the
integrals over $[t_1,t_2]$ tend to zero as $t_1,t_2\uparrow T$.
Thus the lift is Cauchy in the Euclidean metric and has a limit
$\eta_*\in\D$. By continuity,
\[
 F(\eta_*)=\gamma_z(T),\qquad
 \abs{F(\eta_*)}\geq R>\rho.
\]
Consequently $F'(\eta_*)\ne0$. Choose neighborhoods $U$ of $\eta_*$
and $V$ of $\gamma_z(T)$ such that $F:U\to V$ is biholomorphic,
and write $g:V\to U$ for its inverse. For $t<T$ sufficiently close
to $T$, both $\eta_z(t)\in U$ and $\gamma_z(t)\in V$, so
$\eta_z(t)=g(\gamma_z(t))$. If $T<1$, this formula continues the
lift beyond $T$, contradicting its definition. Hence $T=1$, and
setting $\eta_z(1)=\eta_*$ completes the unique lift to $[0,1]$.

Define
\begin{equation}\label{geom:retraction-map}
 P_R(z)=
 \begin{cases}
 z,&\abs{F(z)}\leq R,\\
 \eta_z(1),&\abs{F(z)}>R.
 \end{cases}
\end{equation}
This map is defined on all of $\D$, takes values in $S_R$, and
fixes every point of $S_R$.

To prove continuity at a point $z_0$ with $\abs{F(z_0)}>R$, put
$\eta_0=\eta_{z_0}$ and $\gamma_0=\gamma_{z_0}$. At each
$s\in[0,1]$, choose inverse neighborhoods $U_s$ of $\eta_0(s)$
and $V_s$ of $\gamma_0(s)$, and let $g_s:V_s\to U_s$ be the
inverse of $F$. By continuity of the two paths, there is a
relative open interval $I_s$ about $s$ on which
$\eta_0(t)\in U_s$, $\gamma_0(t)\in V_s$, and
$\eta_0(t)=g_s(\gamma_0(t))$. A finite subcover and a Lebesgue
number for this cover of $[0,1]$ give a subdivision
\[
 0=t_0<t_1<\cdots<t_N=1
\]
such that each $[t_{j-1},t_j]$ lies in one of the intervals $I_s$.
Relabel the corresponding inverse branches as $g_1,\ldots,g_N$.
Their open domains $V_j$ contain the compact path pieces, and
\[
 \eta_0(t)=g_j(\gamma_0(t))
 \qquad(t_{j-1}\leq t\leq t_j).
\]
At a joining point $w_j=\gamma_0(t_j)$, both adjacent branches
take the value $\eta_0(t_j)$. Choose a neighborhood of this value
on which $F$ is injective. By continuity, a sufficiently small
disk $D_j$ about $w_j$, contained in $V_j\cap V_{j+1}$, has both
branch images in that neighborhood. The identities
$F\circ g_j=F\circ g_{j+1}=\operatorname{id}$ then imply
$g_j=g_{j+1}$ on $D_j$.

For $z$ near $z_0$ with $|F(z)|>R$, the formula
\[
 \gamma_z(t)=(1-t)F(z)+tR\frac{F(z)}{|F(z)|}
\]
shows that $\gamma_z\to\gamma_0$ uniformly on $[0,1]$ as $z\to z_0$.
Each compact piece $\gamma_0([t_{j-1},t_j])$ has positive distance
from the complement of $V_j$. Since there are only finitely many
pieces and joining disks, one neighborhood of $z_0$ ensures
$\gamma_z([t_{j-1},t_j])\subset V_j$ and
$\gamma_z(t_j)\in D_j$ for every relevant $j$. Shrink this
neighborhood also into the open set $g_1(V_1)$ containing $z_0$.
Then $g_1(F(z))=z$, and the formulas $g_j\circ\gamma_z$ match at
the joins and give a lift starting at $z$. By uniqueness this is
$\eta_z$. In particular,
\[
 P_R(z)=g_N\left(R\frac{F(z)}{\abs{F(z)}}\right)
\]
near $z_0$, which proves continuity there.

At a point $z_*$ with $|F(z_*)|=R$, the length estimate gives a
direct continuity argument. Write $x_+=\max\{x,0\}$. For every
$z\in\D$, the definition of $P_R$ and \eqref{geom:length} imply
\[
 |P_R(z)-z|
 \le d_h(P_R(z),z)
 \le\frac1\delta
 \bigl(\operatorname{arctanh}|F(z)|
             -\operatorname{arctanh}R\bigr)_+,
\]
and hence
\[
 |P_R(z)-z_*|
 \le |z-z_*|+\frac1\delta
 \bigl(\operatorname{arctanh}|F(z)|
             -\operatorname{arctanh}R\bigr)_+.
\]
The right-hand side tends to zero as $z\to z_*$. Finally, where
$|F|<R$, continuity of $F$ makes $P_R$ locally the identity.
Hence $P_R$ is a continuous retraction.

Since $P_R$ takes values in $S_R$ and fixes its points,
$P_R(\D)=S_R$. This image is nonempty and path connected because
$\D$ has these properties. For $\rho<r<1$, choosing any $R\in(\rho,r)$
shows that $\Omega_r$ is nonempty.  Given $z_1,z_2\in\Omega_r$,
choose
\[
 \max\{\rho,\abs{F(z_1)},\abs{F(z_2)}\}<R<r.
\]
Composing any path in $\D$ from $z_1$ to $z_2$ with $P_R$ gives a
path with the same endpoints contained in $S_R\subset\Omega_r$.
Thus $\Omega_r$ is path connected.
\end{proof}

\begin{corollary}\label{geom:sequence}
Let $\theta$ be a nonconstant inner function which is not
one-component.  Then, for every $\rho\in(0,1)$,
\begin{equation}\label{geom:infimum}
 \inf_{\{z\in\D:\abs{\theta(z)}>\rho\}}q_\theta(z)=0.
\end{equation}
In particular, there are points $a_n\in\D$, $n\geq2$, such that
\begin{equation}\label{geom:chosen-points}
 \abs{\theta(a_n)}>1-\frac1n,
 \qquad q_\theta(a_n)<\frac1n.
\end{equation}
Thus $\abs{\theta(a_n)}\to1$ and $q_\theta(a_n)\to0$.
\end{corollary}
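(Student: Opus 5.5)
The plan is to argue by contradiction, using Lemma~\ref{geom:retract} with $F=\theta$. Fix $\rho\in(0,1)$ and suppose \eqref{geom:infimum} fails. Then there is $\delta>0$ with $q_\theta(z)\ge\delta$ whenever $\abs{\theta(z)}>\rho$. First we check that this set is nonempty, so the infimum is over a nonempty set. A nonconstant inner function satisfies $\sup_{\D}\abs\theta=1$: its radial limits are unimodular almost everywhere, and so $\abs{\theta(z)}$ gets arbitrarily close to $1$. Hence the hypothesis \eqref{geom:lower-bound} of Lemma~\ref{geom:retract} holds with these $\rho$ and $\delta$.

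Lemma~\ref{geom:retract} then shows that for every $r\in(\rho,1)$ the sublevel set $\Omega_\theta(r)=\{z:\abs{\theta(z)}<r\}$ is nonempty and path connected, hence connected. Taking any such $r$ shows that $\theta$ is one-component in the sense of \eqref{intro:cls}. This contradicts the assumption, so \eqref{geom:infimum} holds for every $\rho\in(0,1)$.

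For the sequence, fix $n\ge2$ and apply \eqref{geom:infimum} with $\rho=1-1/n$. Since the infimum over the nonempty set $\{\abs\theta>1-1/n\}$ equals $0<1/n$, there is a point $a_n$ in that set with $q_\theta(a_n)<1/n$. This gives \eqref{geom:chosen-points}, and the limits $\abs{\theta(a_n)}\to1$ and $q_\theta(a_n)\to0$ follow immediately.

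There is no substantial obstacle once Lemma~\ref{geom:retract} is available. The only points to verify are the nonemptiness of $\{\abs\theta>\rho\}$, which uses the boundary behavior of inner functions, and that path connectedness implies the connectedness required in the definition of one-component.
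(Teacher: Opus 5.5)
Your proposal is correct and matches the paper's proof essentially step for step: nonemptiness of $\{\abs\theta>\rho\}$ from $\sup_\D\abs\theta=1$, Lemma~\ref{geom:retract} with $F=\theta$ to show that a positive infimum would make $\theta$ one-component, and then $\rho=1-1/n$ to extract the points $a_n$. The one tacit point, also left implicit in the paper, is that a nonconstant inner function maps $\D$ into $\D$ by the maximum principle, so the lemma applies.
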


\begin{proof}
Since $\theta$ is inner, $\sup_{\D}\abs{\theta}=1$; hence the set
over which the infimum in \eqref{geom:infimum} is taken is
nonempty.  If that infimum were positive for some $\rho$,
Lemma~\ref{geom:retract} would make every sublevel set
$\{\abs{\theta}<r\}$ with $\rho<r<1$ nonempty and connected.
This would make $\theta$ one-component, a contradiction.
Applying \eqref{geom:infimum} with $\rho=1-1/n$ gives
\eqref{geom:chosen-points}.
\end{proof}

\section{Separated canonical factors and product test functions}
\label{sec:factors}

Throughout this section, $m$ denotes normalized Lebesgue measure on $\T$.
For an inner function $\vartheta$, we write
$K_\vartheta=H^2\ominus\vartheta H^2$ and
\[
 k_0^\vartheta=1-\overline{\vartheta(0)}\vartheta.
\]
All constants described as universal are independent of the inner function
and of the parameter $t$.  We shall use only $0<t\le t_0:=1/2$.
The zero index set $\mathcal I$ may be empty, finite, or $\mathbb N$;
in the finite case we use $\mathcal I=\{1,\ldots,N_*\}$.
All zero sums and products are taken over this set, with multiplicities.
Empty sums equal $0$ and empty products equal $1$; finite partial
products are kept constant after the last factor. These conventions
also apply to every selected subfactor below. The singular measure
may be nonzero even when $\mathcal I$ is finite or empty.
For clarity, the hyperbolic derivative ratio at the origin is
\[
 q_\theta(0)=\frac{\abs{\theta'(0)}}{1-\abs{\theta(0)}^2}.
\]

\begin{lemma}[Canonical mass and its first moment]
\label{factor:canonical}
Suppose that $\theta$ is inner and $\theta(0)=e^{-t}>0$, where
$0<t\le 1/2$.  List its zeros with multiplicity as
$a_j=\rho_j\eta_j$, where $0<\rho_j<1$ and $\eta_j\in\T$, and put
$s_j=-\log\rho_j$.  Let $\mu$ be its canonical singular measure.
The positive measures
\[
 \sigma=\sum_j s_j\delta_{\eta_j}+\mu,
 \qquad
 \nu=\sum_j\sinh(s_j)\delta_{\eta_j}+\mu
\]
satisfy
\begin{equation}
 \begin{gathered}
  \sigma(\T)=t,\qquad C_0=2\sinh(1/2),\\
  \sigma(G)\le\nu(G)\le C_0\sigma(G)
  \quad\text{for every Borel set }G\subset\T.
 \end{gathered}
 \label{factor:measure-comparison}
\end{equation}
Moreover,
\begin{equation}
 \frac{\theta'(0)}{\theta(0)}
 =-2\int_\T\overline\eta\,d\nu(\eta).
 \label{factor:moment-identity}
\end{equation}
If $q_\theta(0)\le 1/2$, then, with $M=\nu(\T)$,
\begin{equation}
 \abs{\int_\T\eta\,d\nu(\eta)}\le\frac35 M.
 \label{factor:moment-bound}
\end{equation}
\end{lemma}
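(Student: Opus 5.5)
We start from the canonical factorization $\theta=B S_\mu$; the unimodular constant is $1$ because $\theta(0)>0$. Here $B$ is the Blaschke product with zeros $a_j$, normalized so that $B(0)=\prod_j\rho_j$, and $S_\mu(z)=\exp\bigl(-\int_\T\frac{\eta+z}{\eta-z}\,d\mu(\eta)\bigr)$. Evaluating at the origin gives $-\log\theta(0)=\sum_j s_j+\mu(\T)$, which is $\sigma(\T)=t$.

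For the comparison \eqref{factor:measure-comparison}, note that each $s_j\le t\le 1/2$. Hence $s\le\sinh s\le \frac{\sinh(1/2)}{1/2}s=C_0 s$ for $0<s\le1/2$, by convexity of $\sinh$ on $[0,\infty)$. Summing over the atoms lying in $G$ and adding $\mu(G)$ to all three terms gives $\sigma(G)\le\nu(G)\le C_0\sigma(G)$.

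For \eqref{factor:moment-identity}, we differentiate $\log\theta$ at $0$, which is legitimate because $\theta(0)\ne0$.
\begin{itemize}
\item \emph{Singular factor.} The derivative of $-\frac{\eta+z}{\eta-z}$ at $z=0$ is $-2\bar\eta$, so $S_\mu'(0)/S_\mu(0)=-2\int\bar\eta\,d\mu$.
\item \emph{Blaschke factor.} For $b_j(z)=\frac{|a_j|}{a_j}\frac{a_j-z}{1-\bar a_jz}$ we get $b_j'(0)/b_j(0)=-\frac1{a_j}+\bar a_j=-\bar\eta_j(\rho_j^{-1}-\rho_j)=-2\bar\eta_j\sinh s_j$.
\end{itemize}
Summing is justified because the logarithmic derivative of a locally uniformly convergent product is the sum of the factor derivatives. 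Concretely, $\sum_j\sinh s_j\le C_0 t<\infty$, so the series converges absolutely, and $B'/B=\sum b_j'/b_j$ near $0$.

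For \eqref{factor:moment-bound}, we take absolute values in \eqref{factor:moment-identity}. Since $|\int\eta\,d\nu|=|\int\bar\eta\,d\nu|$,
\[
\Bigl|\int\eta\,d\nu\Bigr|=\frac{|\theta'(0)|}{2e^{-t}}=\frac{q_\theta(0)(1-e^{-2t})}{2e^{-t}}=q_\theta(0)\sinh t\le\tfrac12\sinh t .
\]
It remains to compare $\sinh t$ with $M$. From \eqref{factor:measure-comparison}, $M\ge\sigma(\T)=t$, so it suffices to show $\frac12\sinh t\le\frac35 t$, that is, $\sinh t/t\le 6/5$ for $t\le1/2$. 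Since $\sinh t/t$ is increasing, its maximum on this range is $2\sinh(1/2)\approx1.0422<1.2$, and the bound follows.

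The only step requiring any care is the convergence of the logarithmic derivative of the infinite Blaschke product. It is handled by the summability $\sum\sinh s_j<\infty$ established above, together with the standard locally uniform convergence of $\sum(1-\rho_j)$-type products. Everything else is elementary.
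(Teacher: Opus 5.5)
Your proposal is correct and follows the paper's proof essentially step by step. The shared steps are: evaluating the canonical factorization at the origin to get $\sigma(\T)=t$, the elementary comparison $s\le\sinh s\le C_0 s$ on $[0,1/2]$, termwise logarithmic derivatives $-2\sinh(s_j)\overline\eta_j$ and $-2\int\overline\eta\,d\mu$, and the final chain $\frac12\sinh t\le\frac35 t\le\frac35 M$. The only difference is that the paper writes out the limit passages explicitly (Cauchy's estimate for $B_N'(0)\to B'(0)$, dominated convergence for the singular integral), whereas you cite these as standard facts.
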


\begin{proof}
There is no zero at the origin.  With all Blaschke factors normalized
to be positive there, the canonical factorization is
\begin{equation}
 \theta(z)=
 \prod_j\frac{\rho_j-\overline\eta_j z}
                  {1-\rho_j\overline\eta_j z}
 \exp\left(-\int_\T\frac{\eta+z}{\eta-z}\,d\mu(\eta)\right).
 \label{factor:canonical-product}
\end{equation}
The unimodular constant equals $1$ because $\theta(0)>0$.
Evaluation at the origin gives
\[
 \sum_j s_j+\mu(\T)=t.
\]
In particular $s_j\le t\le1/2$, and
\[
 s\le\sinh s\le C_0s\qquad(0\le s\le1/2).
\]
This proves \eqref{factor:measure-comparison}, including its assertion
for every Borel set and for zeros counted with multiplicity.
Write $B_j(z)=(\rho_j-\overline\eta_jz)/(1-\rho_j\overline\eta_jz)$.
Direct differentiation gives
\[
 \frac{B_j'(0)}{B_j(0)}
 =-(\rho_j^{-1}-\rho_j)\overline\eta_j
 =-2\sinh(s_j)\overline\eta_j.
\]
Let $B_N=\prod_{j\in\mathcal I,\,j\le N}B_j$ and let $B$ be the
canonical Blaschke product. If $\mathcal I$ is finite or empty,
$B_N=B$ for all sufficiently large $N$. In general, local uniform
convergence and Cauchy's formula give, for
any fixed $0<r<1$,
\[
 |B_N'(0)-B'(0)|\le r^{-1}\sup_{|z|=r}|B_N(z)-B(z)|\longrightarrow0.
\]
Also $B_N(0)\to e^{-\sum_j s_j}>0$. Passing to the limit in the
finite-product logarithmic-derivative identities gives
$B'(0)/B(0)=\sum_{j\in\mathcal I}B_j'(0)/B_j(0)$.
The resulting sum is absolutely convergent because
$\sum_j\sinh(s_j)\le C_0t$.
For $L(z)=\int_\T(\eta+z)/(\eta-z)\,d\mu(\eta)$,
the difference quotient is
\[
 \frac{L(h)-L(0)}{h}=\int_\T\frac{2}{\eta-h}\,d\mu(\eta),
 \qquad 0<|h|\le r<1.
\]
Its integrand converges pointwise to $2\overline\eta$ as $h\to0$
and has the $\mu$-integrable majorant $2/(1-r)$. Dominated convergence
(on real and imaginary parts) proves
$L'(0)=2\int_\T\overline\eta\,d\mu(\eta)$.
Combining the two contributions proves \eqref{factor:moment-identity}.

If $q_\theta(0)\le1/2$, then
\[
 \abs{\int_\T\eta\,d\nu(\eta)}
 =\frac{\abs{\theta'(0)}}{2\theta(0)}
 \le\frac{1-e^{-2t}}{4e^{-t}}
 =\frac12\sinh t.
\]
Since $M\ge t$ and $s\mapsto\sinh(s)/s$ is increasing on $(0,\infty)$,
\[
 \frac1M\abs{\int_\T\eta\,d\nu(\eta)}
 \le\frac{\sinh t}{2t}
 \le\sinh(1/2)<\frac35.
\]
\end{proof}

\begin{lemma}[Two separated sets carrying positive mass]
\label{factor:split}
Let $\nu$ be a finite positive measure on $\T$, with
$M=\nu(\T)>0$, and suppose that
\[
 \abs{\int_\T\eta\,d\nu(\eta)}\le\frac35 M.
\]
There are disjoint Borel sets $E,F\subset\T$ such that
\begin{equation}
 \nu(E)\ge\frac{M}{16},\qquad
 \nu(F)\ge\gamma M,
 \qquad
 \gamma=\frac{\sqrt3/2-3/5}{1+\sqrt3/2}>0,
 \label{factor:split-masses}
\end{equation}
and
\begin{equation}
 \operatorname{dist}(\overline E,\overline F)\ge\Delta_0,
 \qquad \Delta_0=2\sin\frac{5\pi}{96}>0.
 \label{factor:split-distance}
\end{equation}
Here the distance in \eqref{factor:split-distance} is Euclidean distance.
One may take $E$ to be a half-open arc of angular length $\pi/8$ and
$F$ to be the complement of an open arc concentric with $E$.
\end{lemma}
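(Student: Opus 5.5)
The plan is to use two elementary steps: a pigeonhole argument that produces the arc $E$, and a first-moment argument that produces $F$. The angular parameters are chosen so that the three constants in \eqref{factor:split-masses} and \eqref{factor:split-distance} come out exactly.

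\emph{Step 1: choosing $E$.} Partition $\T$ into $16$ consecutive half-open arcs $J_1,\dots,J_{16}$, each of angular length $\pi/8$. Their $\nu$-masses sum to $M$, so some $J_k$ satisfies $\nu(J_k)\ge M/16$. Set $E=J_k$ and let $\zeta_0\in\T$ be its midpoint. Then every point of $\overline E$ has angular distance at most $\pi/16$ from $\zeta_0$.

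\emph{Step 2: choosing $F$.} Let $A$ be the open arc centered at $\zeta_0$ with angular half-width $\pi/6$, and put $F=\T\setminus A$. Then $F$ is closed and disjoint from $E$, and it is the complement of an open arc concentric with $E$, as the statement requires. For the mass bound, rotate by $\overline{\zeta_0}$ and take real parts. For $\eta\in A$ we have $\Rea(\eta\overline{\zeta_0})>\cos(\pi/6)=\sqrt3/2$, and for $\eta\in F$ we have $\Rea(\eta\overline{\zeta_0})\ge-1$. Using $\nu(A)=M-\nu(F)$, this gives
\[
 \frac35M\ge\Rea\Bigl(\overline{\zeta_0}\int_\T\eta\,d\nu(\eta)\Bigr)
 \ge\frac{\sqrt3}{2}\bigl(M-\nu(F)\bigr)-\nu(F).
\]
Rearranging yields $\nu(F)\ge\gamma M$ with $\gamma$ exactly as in \eqref{factor:split-masses}. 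Finally $\gamma>0$, because $\sqrt3/2\approx0.866>3/5$.

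\emph{Step 3: separation.} Any $x\in\overline E$ and $y\in\overline F=F$ have angular separation at least $\pi/6-\pi/16=5\pi/48$. The chord subtending an angle $\alpha\le\pi$ has length $2\sin(\alpha/2)$, which is increasing in $\alpha$. Hence $|x-y|\ge 2\sin(5\pi/96)=\Delta_0$, which is \eqref{factor:split-distance}.

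I do not expect a genuine obstacle here. The only delicate point is bookkeeping: the half-width $\pi/6$ must be large enough that the cosine bound $\sqrt3/2$ beats $3/5$, while still leaving the angular gap $5\pi/48$ beyond the half-width $\pi/16$ of $E$. One also needs to check that closedness of $F$ and the half-open convention for $E$ do not spoil the distance estimate, which holds because the bound is stated for closures.
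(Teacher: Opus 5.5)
Your proposal is correct and follows the paper's proof essentially step for step. The paper uses the same pigeonhole choice of a maximal-mass arc among sixteen half-open arcs, the same open arc of half-width $\pi/6$ about its center with $F$ its complement, the same real-part estimate after rotating by $\overline{\zeta_0}$, and the same angular gap $\pi/6-\pi/16=5\pi/48$ converted to the chord length $2\sin(5\pi/96)$.
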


\begin{proof}
Partition $\T$ into $16$ half-open arcs of equal angular length,
assigning every endpoint to exactly one arc.  Choose an arc $E$ of
maximal $\nu$-mass, so that $\nu(E)\ge M/16$, and denote its center
by $\xi\in\T$.  Let $J$ be the open arc with center $\xi$ and angular
half-width $\pi/6$, and set $F=\T\setminus J$.
The closure of $E$ has angular half-width $\pi/16$, so $E\subset J$,
and the circular angular distance between $\overline E$ and $F$ is
at least
\[
 \frac\pi6-\frac\pi{16}=\frac{5\pi}{48}.
\]
This proves \eqref{factor:split-distance}. For $\eta\in J$, write
$\eta=\xi e^{iu}$ with $|u|<\pi/6$. Then
\[
 \operatorname{Re}(\overline\xi\eta)=\cos u
 \ge\cos(\pi/6)=\frac{\sqrt3}{2}.
\]
On all of $\T$ we also have
$\operatorname{Re}(\overline\xi\eta)\ge-1$. Therefore
\[
 \frac35 M
 \ge\operatorname{Re}\left(\overline\xi
                   \int_\T\eta\,d\nu(\eta)\right)
 \ge\frac{\sqrt3}{2}\nu(J)-\nu(F)
 =\frac{\sqrt3}{2}M-
        \left(1+\frac{\sqrt3}{2}\right)\nu(F).
\]
Rearranging gives the estimate for $\nu(F)$.
Atoms at partition endpoints or at the endpoints of $J$ cause no
ambiguity: the former are assigned by the half-open partition, and
the latter belong to $F$.
\end{proof}

With the notation and canonical factorization of
Lemma~\ref{factor:canonical}, for a Borel
set $G\subset\T$ define the canonical subfactor
\begin{equation}
 \theta_G(z)=
 \prod_{\eta_j\in G}
       \frac{\rho_j-\overline\eta_jz}
            {1-\rho_j\overline\eta_jz}
 \exp\left(-\int_G\frac{\eta+z}{\eta-z}\,d\mu(\eta)\right).
 \label{factor:subfactor}
\end{equation}
Zeros are selected with their multiplicities.
The restricted singular measure is finite and singular, and every
selected zero subsequence satisfies the Blaschke condition. Thus
$\theta_G$ is inner (with $\theta_G\equiv1$ if $\sigma(G)=0$).
More explicitly, for $|z|\le r<1$,
\[
 |1-B_j(z)|\le\frac{1+r}{1-r}(1-\rho_j)
              \le\frac{1+r}{1-r}s_j.
\]
The product therefore converges locally uniformly in $\D$, and
\begin{equation}
 \theta_G(0)=e^{-\sigma(G)}.
 \label{factor:subfactor-zero}
\end{equation}
For every finite Borel partition of $\T$, the corresponding subfactors
multiply to $\theta$. Indeed, the displayed summability permits
regrouping the Blaschke factors on compact subsets, while additivity
of the restricted singular measures gives the exponential factor.
To justify the regrouping explicitly, on $|z|\le r$ the change
caused by adjoining or deleting any finite collection of tail factors
is at most $(1+r)/(1-r)$ times the sum of their $s_j$'s. This tends
to zero uniformly once a sufficiently large finite set of zero
occurrences has been retained, independently of their order.

\begin{lemma}[Uniform estimate away from the selected boundary set]
\label{factor:off}
Let $G\subset\T$ be Borel, and let $A\subset\T$ be a closed set
containing $G$.  For $0<\delta\le2$, put
\[
 X(A,\delta)=
 \{u\zeta:0\le u\le1,\ \zeta\in\T,
                 \operatorname{dist}(\zeta,A)\ge\delta\}.
\]
The subfactor $\theta_G$ extends continuously to
$\D\cup X(A,\delta)$, and
\begin{equation}
 \abs{1-\theta_G(u\zeta)}
 \le\frac4\delta\sigma(G)
 \qquad(u\zeta\in X(A,\delta)).
 \label{factor:off-estimate}
\end{equation}
In particular, its values at $u=1$ in this formula are its radial
boundary limits.  No assumption on atomicity or on the original
support of the singular measure is required.
\end{lemma}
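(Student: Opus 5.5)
The plan is to write $\theta_G=B_G\,S_G$, with $B_G$ the selected Blaschke product and $S_G$ the selected singular factor, and to estimate each factor separately at points $w=u\zeta\in X(A,\delta)$. The only geometric input is that every selected zero $a_j=\rho_j\eta_j$ and every point $\eta$ in the support of $\mu|_G$ lies in $G\subset A$ (or in its closure, which is in $A$ since $A$ is closed). For $w=u\zeta$ with $\operatorname{dist}(\zeta,A)\ge\delta$, we first record
\[
|\eta-w|\ge\tfrac12|\eta-\zeta|\ge\delta/2\qquad(\eta\in A).
\]
This follows from the elementary inequality $|\eta-u\zeta|\ge\tfrac12|\eta-\zeta|$ for $\eta,\zeta\in\T$ and $0\le u\le1$, which we check by writing $|\eta-u\zeta|^2=(1-u)^2+u|\eta-\zeta|^2$ and minimizing over $u$. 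By the same argument, $|1-\rho_j\overline\eta_j w|=|\eta_j-\rho_j w|\ge\delta/2$.

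For a single Blaschke factor $B_j(z)=(\rho_j-\overline\eta_jz)/(1-\rho_j\overline\eta_jz)$, a direct computation gives
\[
1-B_j(w)=\frac{(1-\rho_j)(1+\overline\eta_j w)}{1-\rho_j\overline\eta_j w},
\]
so that $|1-B_j(w)|\le 2(1-\rho_j)/(\delta/2)\le (4/\delta)s_j$. This bound is uniform on $\D\cup X(A,\delta)$. For the singular factor, put $L_G(w)=\int_G\frac{\eta+w}{\eta-w}\,d\mu(\eta)$. Then $\operatorname{Re}L_G(w)\ge0$ on $\D$; on the boundary part of $X$ one has $|w|=1$, so the real part of the integrand vanishes there, and by continuity we still get $\operatorname{Re}L_G\ge0$ on $X$. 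The integrand is bounded by $2/(\delta/2)=4/\delta$, hence $|L_G(w)|\le(4/\delta)\mu(G)$. Since $|1-e^{-\lambda}|\le|\lambda|$ whenever $\operatorname{Re}\lambda\ge0$ (integrate $e^{-s\lambda}$ along the segment), we obtain $|1-S_G(w)|\le(4/\delta)\mu(G)$.

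To combine these, we use the multiplicative inequality
\[
\Bigl|1-\prod_k x_k\Bigr|\le\sum_k|1-x_k|\qquad\text{for }|x_k|\le1,
\]
proved by telescoping. It requires $|B_j|\le1$ and $|S_G|\le1$ on the set, which holds on $\D$ and passes to $X$ by continuity. The finite partial products then satisfy $|1-\theta_G|\le\frac4\delta\bigl(\sum_{\eta_j\in G}s_j+\mu(G)\bigr)=\frac4\delta\sigma(G)$.

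The main obstacle is the continuity statement, and with it the passage from partial products to the infinite product on $X$. Each $B_j$ and $S_G$ is continuous on $\D\cup X(A,\delta)$: the denominators are bounded below by $\delta/2$, and we apply dominated convergence to $L_G$, since the region near each boundary point of $X$ stays at distance at least about $\delta/2$ from $A\supset\operatorname{supp}$. The tail bound from the telescoping inequality, $\sum_{j>N}(4/\delta)s_j\to0$, then gives uniform convergence of the partial products on $\D\cup X$. So $\theta_G$ extends continuously and the estimate passes to the limit. Finally, the radial segment $\{u\zeta:0\le u\le1\}$ lies in $X$, so the value at $u=1$ is the radial limit.
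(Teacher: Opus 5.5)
Your factorwise estimates follow the paper's route and are correct. Your inequality $\abs{\eta-u\zeta}\ge\frac12\abs{\eta-\zeta}$, obtained from $\abs{\eta-u\zeta}^2=(1-u)^2+u\abs{\eta-\zeta}^2$, is the paper's $\abs{1-v\omega}\ge\frac12\abs{1-\omega}$. The Blaschke-factor bound, the telescoping inequality, the bound $\abs{L_G}\le(4/\delta)\mu(G)$ together with $\operatorname{Re}L_G\ge0$, and $\abs{1-e^{-\lambda}}\le\abs{\lambda}$ are exactly the paper's steps. They give uniform convergence of the partial products on $X(A,\delta)$, and hence the estimate and the radial-limit assertion.

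The gap is in the continuity step, which you flagged yourself as the main obstacle. You assert that $\abs{1-B_j(w)}\le(4/\delta)s_j$ holds uniformly on $\D\cup X(A,\delta)$, and that the tail bound gives uniform convergence of the partial products on $\D\cup X(A,\delta)$. Both claims are false.
\begin{itemize}
\item At $w=a_j$ one has $\abs{1-B_j(a_j)}=1$, which exceeds $(4/\delta)s_j$ as soon as $s_j<\delta/4$.
\item Uniform convergence on $\D$ of the finite partial products would force an infinite selected Blaschke product to extend continuously to $\overline\D$ with unimodular boundary values. It would then be a finite Blaschke product, a contradiction.
\end{itemize}
The lower bound $\abs{\eta_j-\rho_jw}\ge\delta/2$ is available only for $w\in X(A,\delta)$. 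So your argument does not show that the extension is continuous at a point $\zeta_0\in X(A,\delta)\cap\T$ approached from $\D\setminus X(A,\delta)$. That is precisely the content of ``extends continuously to $\D\cup X(A,\delta)$''.

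The repair is the localization you already sketched for $L_G$, applied to the product as well. Choose a relative neighborhood $V\subset\overline\D\setminus\{0\}$ of $\zeta_0$ with $\abs{z/\abs{z}-\zeta_0}<\delta/2$, so that $V\subset X(A,\delta/2)$. Rerunning your estimates with $\delta/2$ in place of $\delta$ gives uniform convergence of the partial products on $V$, with tail bound $(8/\delta)\sum_{j>N}s_j$. The resulting limit is continuous on $V$. It agrees with $\theta_G$ on $V\cap\D$ because the partial products converge locally uniformly in $\D$, and it agrees with your extension on $V\cap X(A,\delta)$. This is the final paragraph of the paper's proof.
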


\begin{proof}
We use the convention $\operatorname{dist}(\zeta,\varnothing)=+\infty$.
If $A=\varnothing$, then $G=\varnothing$ and $\theta_G=1$, so the
assertion is immediate. If $X(A,\delta)=\varnothing$, there is nothing
to prove. Otherwise $A$ and $X(A,\delta)$ are nonempty compact sets:
the latter is the continuous image of $[0,1]$ times the closed set
$\{\zeta\in\T:\operatorname{dist}(\zeta,A)\ge\delta\}$.

For $\omega\in\T$ and $0\le v\le1$,
\begin{equation}
 \abs{1-v\omega}\ge\frac12\abs{1-\omega}.
 \label{factor:denominator}
\end{equation}
Indeed, $1-v\le\abs{1-v\omega}$, and the triangle inequality gives
$\abs{1-\omega}\le\abs{1-v\omega}+(1-v)$.
For a selected zero and $z=u\zeta\in X(A,\delta)$, write
\[
 B_j(z)=\frac{\rho_j-\overline\eta_jz}
                  {1-\rho_j\overline\eta_jz}.
\]
Using \eqref{factor:denominator} with
$v=\rho_ju$ and $\omega=\overline\eta_j\zeta$, we obtain
\[
 \abs{1-B_j(u\zeta)}
 =\frac{(1-\rho_j)\abs{1+u\overline\eta_j\zeta}}
        {\abs{1-\rho_ju\overline\eta_j\zeta}}
 \le\frac4\delta(1-\rho_j)
 \le\frac4\delta s_j.
\]
Every finite product has modulus at most $1$ on the closed disk, since
\[
 |1-\rho_j\overline\eta_jz|^2-|\rho_j-\overline\eta_jz|^2
 =(1-\rho_j^2)(1-|z|^2)\ge0.
\]
Its telescoping expansion therefore gives
\[
 \abs{1-\prod_{j\in\mathcal F}B_j(u\zeta)}
 \le\frac4\delta\sum_{j\in\mathcal F}s_j
\]
for every finite set $\mathcal F$ of selected zero occurrences.
If the selected list is finite or empty, let $P_G$ denote its full
product; the estimate and continuity already follow. For an infinite selected
list, let $P_n$ be the product of its first $n$ factors. Then
\[
 \sup_{z\in X(A,\delta)}|P_m(z)-P_n(z)|
 \le\frac4\delta\sum_{j=n+1}^{m}s_j\longrightarrow0
 \qquad(m>n\to\infty),
\]
where the selected zero occurrences have been relabelled.
This proves uniform convergence on $X(A,\delta)$.
Each finite product is continuous on the closed disk, so the uniform
limit $P_G$ is continuous on $X(A,\delta)$ and satisfies $|P_G|\le1$.
On $X(A,\delta)\cap\D$ it agrees with the canonical Blaschke product,
since the same partial products converge locally uniformly in $\D$.
Passing to the limit in the finite-product estimate gives
$|1-P_G(z)|\le(4/\delta)\sum_{\eta_j\in G}s_j$.

For the singular part, write
\[
 L_G(z)=\int_G\frac{\eta+z}{\eta-z}\,d\mu(\eta).
\]
Equation \eqref{factor:denominator}, now with $v=u$, yields
\[
 \abs{\frac{\eta+u\zeta}{\eta-u\zeta}}
 \le\frac4\delta
 \qquad(\eta\in G,\ u\zeta\in X(A,\delta)).
\]
Define $L_G$ on $X(A,\delta)$ by the same integral. If $z_n\to z$
in this set, then the kernel converges pointwise for every $\eta\in G$.
The majorant $(4/\delta)\mathbf1_G$ is integrable against the finite
measure $\mu$. Dominated convergence therefore gives $L_G(z_n)\to L_G(z)$,
including when $z\in\T$. In fact, writing $K(\eta,z)=(\eta+z)/(\eta-z)$,
the denominator bound gives the explicit estimate
\[
 |K(\eta,z)-K(\eta,w)|
 =\frac{2|z-w|}{|\eta-z|\,|\eta-w|}
 \le\frac{8}{\delta^2}|z-w|,
 \qquad z,w\in X(A,\delta),\ \eta\in G.
\]
Thus $L_G$ is uniformly continuous there, with Lipschitz constant
$8\mu(G)/\delta^2$, and $|L_G(z)|\le4\mu(G)/\delta$.
Moreover,
\[
 \operatorname{Re}K(\eta,z)=\frac{1-|z|^2}{|\eta-z|^2}\ge0;
\]
it is zero when $|z|=1$. Consequently $\operatorname{Re}L_G\ge0$ on
the entire radial set, and
\[
 \abs{1-e^{-L_G(z)}}
 =\abs{L_G(z)\int_0^1e^{-vL_G(z)}\,dv}
 \le\abs{L_G(z)}
 \le\frac4\delta\mu(G).
\]
Combining the two estimates gives exactly
\[
 |1-P_Ge^{-L_G}|
 \le |1-P_G|+|P_G|\,|1-e^{-L_G}|
 \le\frac4\delta\left(\sum_{\eta_j\in G}s_j+\mu(G)\right),
\]
which is \eqref{factor:off-estimate}.

It remains to check continuity for approaches from the disk that
need not stay in $X(A,\delta)$. Fix $\zeta_0\in X(A,\delta)\cap\T$.
The map $z\mapsto z/|z|$ is continuous near $\zeta_0$, so there is
a relative neighborhood $V\subset\overline\D\setminus\{0\}$ of
$\zeta_0$ such that $|z/|z|-\zeta_0|<\delta/2$ for $z\in V$.
Then
\[
 \operatorname{dist}(z/|z|,A)
 \ge\operatorname{dist}(\zeta_0,A)-|z/|z|-\zeta_0|
 \ge\delta/2,
\]
and hence $V\subset X(A,\delta/2)$. The product and integral
arguments already proved on this larger radial set give a continuous
extension on $V$. It agrees with the original analytic function in
$V\cap\D$ and with the first extension on $V\cap X(A,\delta)$,
because both are given by the same product and integral limits.
This proves continuity relative to $\D\cup X(A,\delta)$ and, in
particular, identifies the radial boundary limits.
In particular, restricting an arbitrary singular measure to $G$
does not enlarge its support beyond $A$: every point outside the
closed set $A$ has a neighborhood disjoint from $G$. Only the mass
$\mu(G)$ and the separation $\delta$ enter the estimates.
\end{proof}

\begin{lemma}[An exact $H^1$ norm identity]
\label{factor:lone}
Let $\vartheta$ be inner, with $\vartheta(0)=a\in(0,1)$. Then
\begin{equation}
 \norm{1-a\vartheta}_{L^1(m)}
 =(1-a^2)\int_\T\frac{dm(\xi)}{\abs{1-a\xi}}
 \le 2(1-a^2)\log\frac{e}{1-a},
 \label{factor:lone-identity}
\end{equation}
\end{lemma}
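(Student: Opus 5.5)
The plan is to compute the $L^1$ norm through the distribution of boundary values of $\vartheta$. The key fact is Löwner's lemma (invariance of harmonic measure): for an inner function $\vartheta$ with $\vartheta(0)=a$, the pushforward of $m$ under the boundary map $\vartheta$ is the harmonic measure $P_a\,dm$. Here
\[
 P_a(\xi)=\frac{1-a^2}{\abs{1-a\xi}^2}
\]
is the Poisson kernel at the real point $a$. I would justify this by computing moments. For $n\ge1$, $\int_\T\vartheta^n\,dm=\vartheta(0)^n=a^n$ by the mean value property for the bounded analytic function $\vartheta^n$. Conjugating gives the negative moments, and $\vartheta$ is unimodular a.e. These moments coincide with the Fourier coefficients of $P_a$, which are $a^{\abs n}$. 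So the two measures agree.

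With this, for any bounded Borel $h$ on $\T$ we have $\int_\T h\circ\vartheta\,dm=\int_\T h\,P_a\,dm$. Take $h(\xi)=\abs{1-a\xi}$, which is continuous. This gives
\[
 \norm{1-a\vartheta}_{L^1(m)}=\int_\T\abs{1-a\xi}\frac{1-a^2}{\abs{1-a\xi}^2}\,dm(\xi)
 =(1-a^2)\int_\T\frac{dm(\xi)}{\abs{1-a\xi}},
\]
which is the identity in \eqref{factor:lone-identity}.

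For the inequality, put $\xi=e^{ix}$. Then
\[
 \abs{1-ae^{ix}}^2=(1-a)^2+4a\sin^2(x/2).
\]
I would split into two cases.

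\textbf{Case $a\le1/2$.} The integrand is at most $2$, so the right side is at most $2(1-a^2)$. This is at most $2(1-a^2)\log(e/(1-a))$, since the logarithm is at least $1$.

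\textbf{Case $a>1/2$.} I would use $\abs{1-ae^{ix}}\ge\max\{1-a,\ 2\sqrt a\,\abs{\sin(x/2)}\}$, together with $\abs{\sin(x/2)}\ge\abs x/\pi$ on $[-\pi,\pi]$. This gives the lower bound $\max\{1-a,\,c\abs x\}$ with $c=2\sqrt{a}/\pi\ge\sqrt2/\pi$. Integrating, we get
\[
 \frac1{2\pi}\int_{-\pi}^{\pi}\frac{dx}{\max\{1-a,c\abs x\}}
 =\frac{1}{\pi}\Bigl(\frac1c+\frac1c\log\frac{c\pi}{1-a}\Bigr).
\]
This splits at $\abs x=(1-a)/c$, which is less than $\pi$. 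The constant $1/(\pi c)\le 1/\sqrt2<2$, and $\log(c\pi)\le\log(2)$ can be absorbed. So the integral is at most $\frac1{\sqrt2}\bigl(1+\log 2+\log\frac1{1-a}\bigr)$, which is at most $2\log(e/(1-a))$.

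The main obstacle is only bookkeeping of constants in this elementary estimate. The factor $2$ leaves ample room, so a crude split suffices. Justifying Löwner's lemma by moments is standard; if preferred, one could instead cite the Poisson-integral identity $\int h\circ\vartheta\,dm=(Ph)(a)$ for continuous $h$. That identity holds because $Ph\circ\vartheta$ is harmonic and bounded with boundary values $h\circ\vartheta$ a.e.
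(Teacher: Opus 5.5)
Your proof is correct and follows essentially the same route as the paper: both reduce the identity to $\int_\T H(\vartheta^*)\,dm=\int_\T H\,P_a\,dm$ with $H(\xi)=\abs{1-a\xi}$, and then bound the integral with the same case split at $a=1/2$. The differences are minor. The paper proves the distribution identity by the harmonic-composition argument you list as a fallback (the mean value of $h\circ\vartheta$ plus dominated convergence), rather than by matching Fourier moments. For $a>1/2$ it uses the single lower bound $\abs{1-ae^{ix}}\ge(\varepsilon+x)/\pi$, which gives $\log((\varepsilon+\pi)/\varepsilon)$, instead of your $\max\{\varepsilon,c\abs{x}\}$ split.
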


\begin{proof}
For a continuous function $H$ on $\T$, let $h$ denote its harmonic
Poisson extension to $\D$, continuously extended to the closed disk.
Positivity and unit mass of the Poisson kernel give
$|h|\le\norm H_\infty$ on the closed disk. The function
$h\circ \vartheta$ is harmonic: the chain rule gives
$\Delta(h\circ \vartheta)=|\vartheta'|^2(\Delta h)\circ \vartheta=0$,
applied separately to real and imaginary parts if $H$ is complex.
For each $0<r<1$, its mean on $r\T$ equals $h(\vartheta(0))$.
As $r\uparrow1$, the integrand converges almost everywhere to
$H(\vartheta^*)$ and is bounded by $\norm H_\infty$.
Dominated convergence therefore implies
\[
 \int_\T H(\vartheta^*(\zeta))\,dm(\zeta)
 =h(\vartheta(0))
 =\int_\T H(\xi)\frac{1-a^2}{\abs{\xi-a}^2}\,dm(\xi),
\]
where $\vartheta^*$ denotes the almost everywhere unimodular radial
boundary function.  Take $H(\xi)=\abs{1-a\xi}$ and use
$\abs{\xi-a}=\abs{1-a\xi}$ on $\T$ to obtain the equality in
\eqref{factor:lone-identity}.

For the estimate, denote the integral by $J(a)$. If $a\le1/2$,
then $J(a)\le2\le2\log(e/(1-a))$.
If $a>1/2$, put $\varepsilon=1-a$. For $0\le x\le\pi$,
\[
 \abs{1-ae^{ix}}^2=\varepsilon^2+4a\sin^2(x/2)
 \ge\varepsilon^2+\frac{2x^2}{\pi^2}
 \ge\frac{(\varepsilon+x)^2}{\pi^2}.
\]
Here $\sin(x/2)\ge x/\pi$, and the last inequality follows from
$(x-\varepsilon)^2+(\pi^2-2)\varepsilon^2\ge0$.
Symmetry and the normalization of $m$ now give
\[
 J(a)=\frac1\pi\int_0^\pi\frac{dx}{|1-ae^{ix}|}
 \le\int_0^\pi\frac{dx}{\varepsilon+x}
 =\log\frac{\varepsilon+\pi}{\varepsilon}
 \le2\log\frac e\varepsilon,
\]
where the final inequality uses $0<\varepsilon<1$ and $1+\pi<e^2$.
\end{proof}

\begin{proposition}[Separated product test functions]
\label{factor:tests}
There are universal constants $c,C>0$ with the following property.
Suppose that $\theta$ is inner,
\[
 \theta(0)=e^{-t}>0,\qquad
 0<t\le\frac12,\qquad q_\theta(0)\le\frac12.
\]
Then there is a factorization $\theta=\phi\psi\chi$ into inner
functions, with $\phi(0),\psi(0)>0$, such that
\[
 ct\le-\log\phi(0)\le t,\] and \[ct\le -\log\psi(0)\le t.
\]
The functions
\[
 f=k_0^\phi=1-\phi(0)\phi,
 \qquad
 g=k_0^\psi=1-\psi(0)\psi
\]
belong to $K_\theta\cap H^\infty$ and satisfy
\begin{equation}
 f(0)g(0)\ge ct^2,
 \qquad
 \norm{fg}_{L^1(m)}\le Ct^2\log(e/t).
 \label{factor:tests-estimates}
\end{equation}
\end{proposition}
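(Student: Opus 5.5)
Apply Lemma~\ref{factor:canonical} to obtain $\sigma,\nu$ with $\sigma(\T)=t$, $M=\nu(\T)\in[t,C_0t]$ and the first-moment bound \eqref{factor:moment-bound}. Then Lemma~\ref{factor:split} gives disjoint Borel sets $E,F$ with $\nu(E)\ge M/16$, $\nu(F)\ge\gamma M$ and $\dist(\overline E,\overline F)\ge\Delta_0$. Set
\[
\phi=\theta_E,\qquad \psi=\theta_F,\qquad \chi=\theta_{\T\setminus(E\cup F)}.
\]
The finite-partition multiplicativity gives $\theta=\phi\psi\chi$. By \eqref{factor:subfactor-zero} and \eqref{factor:measure-comparison}, $-\log\phi(0)=\sigma(E)\ge\nu(E)/C_0\ge t/(16C_0)$, and $\sigma(E)\le t$; the same holds for $F$ with $\gamma/C_0$. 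This gives the bounds on $\phi(0),\psi(0)$.

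Membership in $K_\theta\cap H^\infty$: $f=k_0^\phi\in K_\phi\subset K_\theta$ because $\phi$ divides $\theta$, and $f$ is bounded by $2$; the same holds for $g$. Next, $f(0)=1-\phi(0)^2=1-e^{-2\sigma(E)}\ge\sigma(E)\ge c_1t$ because $\sigma(E)\le1/2$, and similarly for $g$. Hence $f(0)g(0)\ge ct^2$.

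The main step is the $L^1$ estimate. Choose closed sets $A_E=\overline E$ and $A_F=\overline F$ at distance at least $\Delta_0$, and split $\T=U_E\cup U_F$, where $U_E=\{\zeta:\dist(\zeta,A_F)\ge\Delta_0/2\}$ and $U_F=\{\zeta:\dist(\zeta,A_E)\ge\Delta_0/2\}$. These cover $\T$ by the triangle inequality. On $U_E$ apply Lemma~\ref{factor:off} to $\psi=\theta_F$ with $A=A_F$ and $\delta=\Delta_0/2$. Its radial boundary values satisfy $|1-\psi|\le(8/\Delta_0)\sigma(F)\le Ct$, so
\[
|g|=|1-\psi(0)+\psi(0)(1-\psi)|\le(1-\psi(0))+Ct\le C't
\]
a.e.\ on $U_E$. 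Hence
\[
\int_{U_E}|fg|\,dm\le C't\norm{f}_1\le C't\cdot 2(1-\phi(0)^2)\log\frac{e}{1-\phi(0)}
\]
by Lemma~\ref{factor:lone}. Since $1-\phi(0)\ge c't$ and $1-\phi(0)^2\le2t$, this is at most $Ct^2\log(e/t)$. On $U_F$ the argument is symmetric, with the roles of $f,g$ interchanged. Adding the two pieces gives \eqref{factor:tests-estimates}.

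The point needing care is that the continuous extension in Lemma~\ref{factor:off} identifies the radial limits a.e., so that the pointwise bound on $U_E$ holds for the boundary function used in $\norm{fg}_1$. The lemma states this explicitly, and Lemma~\ref{factor:lone} applies because $\phi(0)\in(0,1)$. If $E$ or $F$ carries no zeros or singular mass, the mass lower bounds exclude that case. The degenerate $\log$ case $1-\phi(0)$ not small is absorbed because $\log(e/(1-a))\le\log(e/(c't))\le C\log(e/t)$.
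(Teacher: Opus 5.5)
Your proposal is correct and follows the paper's proof essentially step for step. The same sets $E,F$ and subfactors $\phi,\psi,\chi$ appear, and so do the mass bounds via $C_0$, the divisor inclusions $K_\phi,K_\psi\subset K_\theta$, Lemma~\ref{factor:off} on the side away from each support, and Lemma~\ref{factor:lone} for $\norm{f}_1,\norm{g}_1$. The only cosmetic difference is that you cover $\T$ by the two possibly overlapping closed sets $U_E,U_F$, whereas the paper uses an open $\delta$-neighbourhood of $\overline E$ and its complement; this is harmless.
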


\begin{proof}
Apply Lemmas~\ref{factor:canonical} and \ref{factor:split} to
$\nu$, and retain the resulting sets $E,F$.
Put
\[
 \phi=\theta_E,\qquad
 \psi=\theta_F,\qquad
 \chi=\theta_{\T\setminus(E\cup F)}.
\]
These three factors account for all zero occurrences and for the
entire singular measure.  They satisfy $\theta=\phi\psi\chi$.
By \eqref{factor:measure-comparison},
\eqref{factor:split-masses}, and $M\ge t$, we may take
$\kappa=C_0^{-1}\min\{1/16,\gamma\}>0$ to obtain
\[
 \kappa t\le\sigma(E)\le t,
\]
and 
\[ \kappa t\le\sigma(F)\le t.\]
Let $a=\phi(0)=e^{-\sigma(E)}$ and
$b=\psi(0)=e^{-\sigma(F)}$.  Since
\[
 2e^{-1}s\le1-e^{-2s}\le2s\qquad(0\le s\le1/2),
\]
we have $1-a^2\asymp t$ and $1-b^2\asymp t$, with universal
comparison constants.  Consequently
\[
 f(0)g(0)=(1-a^2)(1-b^2)\ge4e^{-2}\kappa^2t^2.
\]
The positive lower mass bounds imply $a,b<1$, so $f$ and $g$ are
nonzero bounded reproducing kernels in $K_\phi$ and $K_\psi$.
Since $\theta H^2\subset\phi H^2$ and $\theta H^2\subset\psi H^2$,
taking orthogonal complements gives $K_\phi,K_\psi\subset K_\theta$.

To estimate the product, set $\delta=\Delta_0/2$ and
\[
 U=\{\zeta\in\T:
             \operatorname{dist}(\zeta,\overline E)<\delta\}.
\]
This is where the separation in Lemma~\ref{factor:split} is used.
For $\zeta\in U$, the triangle inequality gives
\[
 \operatorname{dist}(\zeta,\overline F)
 \ge\operatorname{dist}(\overline E,\overline F)
       -\operatorname{dist}(\zeta,\overline E)
 >\Delta_0-\delta=\delta.
\]
For $\zeta\notin U$, by definition
$\operatorname{dist}(\zeta,\overline E)\ge\delta$.
Thus every point of the circle is separated from at least one of
the two angular supports by the same fixed distance $\delta$.
Apply Lemma~\ref{factor:off} with $(G,A)=(E,\overline E)$ on
$\T\setminus U$, and with $(G,A)=(F,\overline F)$ on $U$.
The identity $1-a\phi=(1-a)+a(1-\phi)$ and $1-e^{-s}\le s$ give
\[
 \abs{f}\le\left(1+\frac4\delta\right)\sigma(E)\le Ct
      \quad\text{on }\T\setminus U,
 \qquad
 \abs{g}\le\left(1+\frac4\delta\right)\sigma(F)\le Ct
      \quad\text{on }U.
\]
These statements use the continuous boundary values of the factor
that is evaluated away from its support; the other factor has its
usual radial boundary value almost everywhere.  Thus
\begin{equation}
 \norm{fg}_{L^1(m)}
 \le Ct\int_U\abs{f}\,dm
       +Ct\int_{\T\setminus U}\abs{g}\,dm
 \le Ct\bigl(\norm{f}_{L^1(m)}+\norm{g}_{L^1(m)}\bigr).
 \label{factor:product-split}
\end{equation}
Finally, $e^{-1/2}s\le1-e^{-s}\le s$ for $0\le s\le1/2$, so
\[
 e^{-1/2}\kappa t\le1-a\le t,\]
 and \[ e^{-1/2}\kappa t\le1-b\le t.
\]
In particular,
$\log(e/(1-a)),\log(e/(1-b))\le\log(e/t)+\log(e^{1/2}/\kappa)$.
Since $\kappa$ is fixed and $\log(e/t)\ge1$, Lemma~\ref{factor:lone}
and $1-a^2,1-b^2\le2t$ yield
\[
 \norm{f}_{L^1(m)}+\norm{g}_{L^1(m)}
 \le Ct\log(e/t).
\]
Substitution into \eqref{factor:product-split} proves
\eqref{factor:tests-estimates}.
\end{proof}

\begin{remark}
The remaining factor $\chi$ need not be constant; it enters the
argument only through the inclusions
$K_\phi,K_\psi\subset K_\theta$.
The required separation concerns only the two closed sets
$\overline E$ and $F$ used in the application of
Lemma~\ref{factor:split}.
\end{remark}

\section{The quantitative lower bound and the main theorem}\label{sec:proof}

We first prove Theorem~\ref{thm:quantitative} at the origin, and then
transfer the estimate to an arbitrary point.

\begin{proof}[Proof of Theorem~\ref{thm:quantitative}]
Suppose first that $\theta(0)=e^{-t}>0$ and $q_\theta(0)\le1/2$.
By Proposition~\ref{factor:tests}, there are inner divisors
$\phi,\psi$ and functions
\[
 f=1-\phi(0)\phi,\qquad g=1-\psi(0)\psi
\]
in $K_\theta\cap H^\infty$ with
\begin{equation}\label{proof:factor-estimates}
 f(0)g(0)\ge c_1t^2,\qquad
 \norm{fg}_1\le C_1t^2\log(e/t).
\end{equation}
The constants $c_1,C_1>0$ are absolute. Applying
Lemma~\ref{pre:pair} at the origin gives
\begin{align}
 \beta_\theta(R_0^\theta)
 &\ge\frac{f(0)g(0)}{(1-e^{-2t})\norm{fg}_1}\notag\\
 &\ge\frac{c_1}{C_1(1-e^{-2t})\log(e/t)}
 \ge\frac{c_1}{2C_1t\log(e/t)}.
 \label{proof:pairing}
\end{align}
This proves the asserted estimate when $a=0$.

For a general point $a$, set $\eta=\theta\circ\tau_a$, where
$\tau_a$ is the disk automorphism defined in Lemma~\ref{pre:unitary}.
By Lemma~\ref{pre:unitary},
\[
 \beta_\theta(R_a^\theta)=\beta_\eta(R_0^\eta),
 \qquad q_\eta(0)=q_\theta(a),
 \qquad \abs{\eta(0)}=\abs{\theta(a)}.
\]
Multiplication of $\eta$ by a unimodular constant makes
$\eta(0)=e^{-t}>0$ and leaves the infimal bounded-symbol norm unchanged.
The preceding estimate therefore applies with exactly the same
absolute constant.
\end{proof}

\begin{proof}[Proof of Corollary~\ref{cor:divergence}]
Take the points $a_n$ from Corollary~\ref{geom:sequence}. Put
$t_n=-\log\abs{\theta(a_n)}$. Then $t_n>0$ and $t_n\to0$.
After discarding finitely many terms, $t_n\le1/2$ and
$q_\theta(a_n)\le1/2$. Theorem~\ref{thm:quantitative} yields
\[
 \beta_\theta(R_{a_n}^\theta)
 \ge\frac{c}{t_n\log(e/t_n)}\longrightarrow\infty,
\]
because $t\log(e/t)\to0$ as $t\downarrow0$.
\end{proof}

\begin{proof}[Proof of Theorem~\ref{thm:main}]
Fix a nonconstant inner function $\theta$ throughout the proof.

\emph{(i)$\Rightarrow$(ii).} This is the sufficient direction proved
in \cite[Corollary~2.5]{BBK}.

\emph{(ii)$\Rightarrow$(iii).} Condition~(ii) says exactly that the
bounded symbol map $Q_\theta$ in \eqref{pre:symbol-map} is surjective.
Both its domain and its range are Banach spaces. Applying the open
mapping theorem as in Lemma~\ref{pre:uniform} gives a finite constant
$B_\theta$, independent of the operator, such that
\[
 \beta_\theta(A)\le B_\theta\norm A
 \qquad(A\in\TT(\theta)).
\]
Each $R_a^\theta$ belongs to $\TT(\theta)$ and has norm one. Hence
$\beta_\theta(R_a^\theta)\le B_\theta$ for every $a\in\D$, which is~(iii).

\emph{(iii)$\Rightarrow$(i).} Suppose instead that $\theta$ is not
one-component. Corollary~\ref{geom:sequence} provides points $a_n$
with $|\theta(a_n)|\to1$ and $q_\theta(a_n)\to0$. Put
$t_n=-\log|\theta(a_n)|$. For all sufficiently large $n$, the
hypotheses of Theorem~\ref{thm:quantitative} hold, and therefore
\[
 \beta_\theta(R_{a_n}^\theta)
 \ge\frac{c}{t_n\log(e/t_n)}\longrightarrow\infty.
\]
This contradicts~(iii), proving that $\theta$ is one-component.

In particular, for a non-one-component $\theta$ the symbol map cannot
be surjective. Thus at least one bounded operator in $\TT(\theta)$
has no bounded symbol. The divergent sequence is used to contradict
the uniform bound forced by surjectivity; each individual member of
that sequence still has a bounded symbol.
\end{proof}

\begin{proof}[Proof of Corollary~\ref{cor:traceclass}]
Write $\Sone(K_\theta)$ for the trace-class ideal, with trace norm
$\norm{\cdot}_{\Sone}$, and let
\[
 X_\theta=\TT(\theta)\cap\Sone(K_\theta).
\]
This is a Banach space in trace norm: a trace-norm Cauchy sequence
converges in $\Sone$, hence in operator norm, and $\TT(\theta)$ is
operator-norm closed.

Let $Q_\theta:L^\infty\to\TT(\theta)$ be the symbol map, and put
\[
 Z_\theta=L^\infty/\ker Q_\theta.
\]
The kernel is closed because $Q_\theta$ is bounded. Thus $Z_\theta$
is Banach, and $Q_\theta$ induces a continuous injective map
$J_\theta:Z_\theta\to\TT(\theta)$. If $A$ has a bounded symbol,
the norm of its unique preimage under $J_\theta$ is precisely
$\beta_\theta(A)$.

Suppose, to obtain a contradiction, that every operator in $X_\theta$
has a bounded symbol. The map
\[
 L:X_\theta\to Z_\theta,\qquad L(A)=J_\theta^{-1}(A)
\]
is then everywhere defined and linear. It has a closed graph:
if $A_j\to A$ in trace norm and $L(A_j)\to v$ in $Z_\theta$, then
continuity of $J_\theta$ gives
$J_\theta v=\lim_j A_j=A$ in operator norm; injectivity gives $v=L(A)$.
The closed graph theorem therefore yields
\[
 \beta_\theta(A)\le C\norm A_{\Sone},\qquad A\in X_\theta.
\]
For the sequence in Corollary~\ref{cor:divergence}, each $R_{a_n}^\theta$
has rank one and operator norm one. Its unique nonzero singular value
is one, so $\norm{R_{a_n}^\theta}_{\Sone}=1$. The displayed bound
contradicts divergence of the infimal bounded-symbol norms.
\end{proof}

\section{Sharpness in a degree-two family}\label{sec:examples}

The logarithmic loss in the denominator of
Theorem~\ref{thm:quantitative} is unavoidable.
The following degree-two family gives matching upper and lower bounds
for the infimal bounded-symbol norm and determines its precise
asymptotic order.

\begin{proposition}\label{prop:degree-two-sharpness}
For $0<r<1$, let
\[
 \theta_r(z)=\frac{r-z^2}{1-rz^2},
 \qquad t=-\log r,
\]
and define
\[
 w_r(e^{ix})=\frac{1-r^2}{\abs{1-re^{2ix}}^2},
\]
\[
 \begin{aligned}
 C_r&=\int_\T\abs{\cos x}\,w_r(e^{ix})\,dm(e^{ix}),\\
 S_r&=\int_\T\abs{\sin x}\,w_r(e^{ix})\,dm(e^{ix}).
 \end{aligned}
\]
Then each $\theta_r$ is one-component and satisfies
$\theta_r(0)=r$ and $q_{\theta_r}(0)=0$. Its infimal
bounded-symbol norm satisfies
\begin{equation}\label{example:two-sided}
 \frac{1}{2S_r}
 \le \beta_{\theta_r}(R_0^{\theta_r})
 \le \frac{1}{2S_r}
       \sqrt{1+\left(\frac{S_r}{C_r}\right)^2}.
\end{equation}
Moreover,
\begin{equation}\label{example:exact-integrals}
 S_r=\frac{1-r}{\pi\sqrt r}
       \log\frac{1+\sqrt r}{1-\sqrt r},
 \qquad
 C_r=\frac{1+r}{\pi\sqrt r}
       \arctan\frac{2\sqrt r}{1-r}.
\end{equation}
Consequently,
\begin{equation}\label{example:sharp-asymptotic}
 \beta_{\theta_r}(R_0^{\theta_r})
 \sim\frac{\pi}{2t\log(1/t)}
 \qquad(t\downarrow0).
\end{equation}
In particular, the scale $[t\log(e/t)]^{-1}$ in
Theorem~\ref{thm:quantitative} is sharp up to absolute multiplicative
constants, already among degree-two Blaschke products.
\end{proposition}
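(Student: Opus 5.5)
The plan is to exploit that $\theta_r(z)=b_r(z^2)$ with $b_r(w)=(r-w)/(1-rw)$, so that everything reduces to a two-dimensional computation.

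\emph{Elementary properties.} Since $\theta_r$ is even, $\theta_r'(0)=0$, hence $q_{\theta_r}(0)=0$, and $\theta_r(0)=r$. For one-component, note that $\{\abs{b_r(w)}<\rho\}$ is a pseudo-hyperbolic disk, which is convex and contains $w=0$ for $\rho>r$. Its preimage under $z\mapsto z^2$ is therefore connected: every point can be joined to the origin by the radial lift of the segment to $0$.

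\emph{Explicit model space and kernels.} Since $K_{b_r}$ is spanned by $1/(1-rw)$, we have
\[
K_{\theta_r}=\{(\alpha+\beta z)/(1-rz^2):\alpha,\beta\in\mathbb C\}.
\]
Direct computation gives
\[
k_0^{\theta_r}=\frac{1-r^2}{1-rz^2},\qquad
\widetilde k_0^{\theta_r}=-\frac{(1-r^2)z}{1-rz^2},\qquad
D_{\theta_r}(0)=1-r^2 .
\]
On $\T$, $\abs{1-rz^2}^{-2}=w_r/(1-r^2)$. For the lower bound in \eqref{example:two-sided} I would apply Lemma~\ref{pre:pair} with
\[
f=\frac{1+z}{1-rz^2},\qquad g=\frac{1-z}{1-rz^2}.
\]
Then $f(0)g(0)=1$ and $\abs{fg}=\abs{1-z^2}\,w_r/(1-r^2)=2\abs{\sin x}\,w_r/(1-r^2)$. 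The lemma gives exactly $\beta\ge 1/(2S_r)$. (The choice $1\pm iz$ would instead produce $\abs{1+z^2}=2\abs{\cos x}$ and the constant $C_r$.)

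\emph{Upper bound.} I would construct an explicit symbol of the form
\[
\varphi=\alpha\,\operatorname{sgn}(\sin x)+\beta\,\operatorname{sgn}(\cos x)
\]
with suitable complex $\alpha,\beta$ that are $\pi/2$ out of phase. Then $\abs\varphi^2=\abs\alpha^2+\abs\beta^2$ almost everywhere, so $\norm\varphi_\infty=\sqrt{\abs\alpha^2+\abs\beta^2}$. To verify $A_\varphi^{\theta_r}=R_0^{\theta_r}$, it suffices to test against the pairings $\ip{A_\varphi u}{v}=\int\varphi\, u\bar v\,dm$ for $u,v$ in the basis. The products $u\bar v$ span $\{(c_0+c_1e^{ix}+c_{-1}e^{-ix})\,w_r/(1-r^2)\}$, so this amounts to three moment equations. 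The constant moment vanishes automatically: $w_r$ is invariant under $x\mapsto -x$ and $x\mapsto\pi-x$, while $\operatorname{sgn}\sin$ is odd under the first map and $\operatorname{sgn}\cos$ under the second. By the same symmetries,
\[
\int\operatorname{sgn}(\sin x)e^{\pm ix}w_r\,dm=\pm iS_r,\qquad
\int\operatorname{sgn}(\cos x)e^{\pm ix}w_r\,dm=C_r .
\]
The two remaining equations should then force $\abs\alpha=1/(2S_r)$ and $\abs\beta=1/(2C_r)$. The target moments come from $R_0=(1-r^2)^{-1}\widetilde k_0\otimes k_0$, which maps the even basis vector to the odd one and kills the odd one. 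This yields the stated upper bound. The main obstacle is this bookkeeping: computing the target matrix entries of $R_0^{\theta_r}$ against $u\bar v$ exactly, with the correct conjugations and signs, so that a solution of this two-term form exists with precisely these moduli.

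\emph{Integrals and asymptotics.} For \eqref{example:exact-integrals}, reduce by the $\pi$-periodicity of $w_r$ to $x\in[0,\pi/2]$. Substitute $u=\cos x$, respectively $u=\sin x$, and write
\[
\abs{1-re^{2ix}}^2=(1-r)^2+4r\sin^2x=(1+r)^2-4r\cos^2x .
\]
This gives rational integrals in $u$ that evaluate to the $\log$ and $\arctan$ expressions. As $t\downarrow0$ we have $1-r\sim t$ and $\log\frac{1+\sqrt r}{1-\sqrt r}\sim\log(1/t)$. Hence $S_r\sim t\log(1/t)/\pi$, while $C_r\to1$, so $S_r/C_r\to0$. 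Both sides of \eqref{example:two-sided} are therefore $\sim\pi/(2t\log(1/t))$, which is \eqref{example:sharp-asymptotic}. Comparison with Theorem~\ref{thm:quantitative} gives the sharpness claim.
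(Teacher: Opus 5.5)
Your proposal is correct and follows essentially the paper's proof. It uses the same basis $(1-rz^2)^{-1},\,z(1-rz^2)^{-1}$ and the same three moment conditions, the same sign-function symbol, the same substitutions, asymptotics and star-shapedness argument. The bookkeeping you flag does close: solving the two moment equations gives $\beta=-1/(2C_r)$ and $\alpha=-i/(2S_r)$, which is exactly the paper's $\omega_r$ and is automatically $\pi/2$ out of phase.

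Your lower bound via Lemma~\ref{pre:pair} with $f,g=(1\pm z)/(1-rz^2)$ uses the same functional as the paper's $m_{-1}-m_1$ argument. Indeed, on $\T$ one has $f\,\overline{C_{\theta_r}g}=(z-\bar z)\,w_r/(1-r^2)$, so the pairing equals $(m_1(\varphi)-m_{-1}(\varphi))/(1-r^2)$.
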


\begin{proof}
Clearly $\theta_r(0)=r$ and $\theta_r'(0)=0$, so
$q_{\theta_r}(0)=0$.
Put $s=\sqrt r$. Since
\[
 \theta_r(z)
 =-\frac{z-s}{1-sz}\frac{z+s}{1+sz},
\]
the function $\theta_r$ is a Blaschke product of degree two, with
zeros $s$ and $-s$. Thus $K_{\theta_r}$ has
dimension two. The two corresponding reproducing kernels are
$1/(1-sz)$ and $1/(1+sz)$; they are linearly independent and hence
span $K_{\theta_r}$. Their sum and difference show that
\[
 K_{\theta_r}
 =\operatorname{span}\left\{
      \frac{1}{1-rz^2},\frac{z}{1-rz^2}\right\}.
\]
The functions
\begin{equation}\label{example:orthonormal-basis}
 e_0(z)=\frac{\sqrt{1-r^2}}{1-rz^2},
 \qquad e_1(z)=ze_0(z),
\end{equation}
form an orthonormal basis. Indeed,
$\abs{e_0(e^{ix})}^2=\abs{e_1(e^{ix})}^2=w_r(e^{ix})$;
since $w_r(e^{ix})$ is the Poisson kernel of parameter $r$ evaluated
at $e^{2ix}$,
\[
 \int_\T w_r\,dm=1.
\]
Moreover,
\[
 \ip{e_0}{e_1}
 =\int_\T e^{-ix}w_r(e^{ix})\,dm(e^{ix})=0,
\]
because $w_r(e^{i(x+\pi)})=w_r(e^{ix})$ whereas
$e^{-i(x+\pi)}=-e^{-ix}$.

At the origin,
\begin{align*}
 k_0^{\theta_r}(z)
   &=\frac{1-r^2}{1-rz^2}=\sqrt{1-r^2}\,e_0(z),\\
 \widetilde k_0^{\theta_r}(z)
   &=-\frac{(1-r^2)z}{1-rz^2}
   =-\sqrt{1-r^2}\,e_1(z).
\end{align*}
Since $D_{\theta_r}(0)=1-r^2$, our tensor convention gives
\begin{equation}\label{example:rank-one-matrix}
 R_0^{\theta_r}=-e_1\otimes e_0.
\end{equation}
Thus, in the ordered basis $(e_0,e_1)$,
\[
 [R_0^{\theta_r}]
 =\begin{pmatrix}0&0\\-1&0\end{pmatrix}.
\]

For $\varphi\in L^\infty(\T)$, set
\[
 m_j(\varphi)=\int_\T
      \varphi(e^{ix})e^{ijx}w_r(e^{ix})\,dm(e^{ix}),
 \qquad j\in\mathbb Z.
\]
Using \eqref{example:orthonormal-basis} and the convention that the
inner product is linear in its first variable, the matrix of the
compression $A_\varphi^{\theta_r}$ is
\begin{equation}\label{example:moment-matrix}
 \bigl[\ip{A_\varphi^{\theta_r}e_j}{e_i}\bigr]_{i,j=0}^1
 =\begin{pmatrix}
    m_0(\varphi)&m_1(\varphi)\\
    m_{-1}(\varphi)&m_0(\varphi)
  \end{pmatrix}.
\end{equation}
Comparing \eqref{example:moment-matrix} with
\eqref{example:rank-one-matrix} shows that
$A_\varphi^{\theta_r}=R_0^{\theta_r}$ if and only if
\begin{equation}\label{example:moment-conditions}
 m_0(\varphi)=0,
 \qquad m_1(\varphi)=0,
 \qquad m_{-1}(\varphi)=-1.
\end{equation}

Let
\[
 \varepsilon_c(x)=\operatorname{sgn}(\cos x),
 \qquad \varepsilon_s(x)=\operatorname{sgn}(\sin x),
\]
where the values at the zeros are immaterial. The symmetries
$w_r(e^{-ix})=w_r(e^{ix})$ and
$w_r(e^{i(x+\pi)})=w_r(e^{ix})$ give
\begin{align*}
 \int_\T \varepsilon_c(x)w_r(e^{ix})\,dm
   &=\int_\T \varepsilon_s(x)w_r(e^{ix})\,dm=0,\\
 \int_\T \varepsilon_c(x)e^{ix}w_r(e^{ix})\,dm
   &=\int_\T \varepsilon_c(x)e^{-ix}w_r(e^{ix})\,dm=C_r,\\
 \int_\T \varepsilon_s(x)e^{ix}w_r(e^{ix})\,dm
   &=iS_r,\\
 \int_\T \varepsilon_s(x)e^{-ix}w_r(e^{ix})\,dm
   &=-iS_r.
\end{align*}
Since $C_r>0$ and $S_r>0$, the following formula is
well-defined:
\begin{equation}\label{example:bounded-symbol}
 \omega_r(e^{ix})
 =-\frac{\varepsilon_c(x)}{2C_r}
   -i\frac{\varepsilon_s(x)}{2S_r}.
\end{equation}
The preceding four identities give
\begin{align*}
 m_0(\omega_r)&=0,\\
 m_1(\omega_r)&=-\frac{C_r}{2C_r}
                 -i\frac{iS_r}{2S_r}=0,\\
 m_{-1}(\omega_r)&=-\frac{C_r}{2C_r}
                 -i\frac{-iS_r}{2S_r}=-1.
\end{align*}
Thus \eqref{example:moment-conditions} shows that $\omega_r$ is a
bounded symbol for $R_0^{\theta_r}$. Since
$\abs{\varepsilon_c(x)}=\abs{\varepsilon_s(x)}=1$ almost everywhere,
\[
 \norm{\omega_r}_\infty
 =\sqrt{\frac{1}{4C_r^2}+\frac{1}{4S_r^2}}
 =\frac{1}{2S_r}\sqrt{1+\left(\frac{S_r}{C_r}\right)^2}.
\]
This proves the upper bound in \eqref{example:two-sided}.

Conversely, every bounded symbol $\varphi$ representing
$R_0^{\theta_r}$ satisfies \eqref{example:moment-conditions}. Hence
\[
 -1=m_{-1}(\varphi)-m_1(\varphi)
 =\int_\T\varphi(e^{ix})(e^{-ix}-e^{ix})
       w_r(e^{ix})\,dm(e^{ix}),
\]
and therefore
\[
 1\le 2\norm{\varphi}_\infty
          \int_\T\abs{\sin x}\,w_r(e^{ix})\,dm(e^{ix})
   =2S_r\norm{\varphi}_\infty.
\]
Taking the infimum over all representing bounded symbols proves the
lower bound in \eqref{example:two-sided}.

It remains to evaluate the two integrals. Since $w_r$ is even and
$\pi$-periodic, each integral over $\T$ equals four times its integral
over $[0,\pi/2]$; the normalization $dm=dx/(2\pi)$ then gives the
factor $2/\pi$ below. Moreover,
\[
 \abs{1-re^{2ix}}^2
 =(1-r)^2+4r\sin^2x
 =(1+r)^2-4r\cos^2x.
\]
Thus, after the substitutions $u=\sin x$ and $u=\cos x$, respectively,
\begin{align*}
 C_r
 &=\frac{2(1-r^2)}{\pi}
   \int_0^{\pi/2}
     \frac{\cos x\,dx}{(1-r)^2+4r\sin^2x}
 =\frac{1+r}{\pi\sqrt r}
   \arctan\frac{2\sqrt r}{1-r},\\
 S_r
 &=\frac{2(1-r^2)}{\pi}
   \int_0^{\pi/2}
     \frac{\sin x\,dx}{(1+r)^2-4r\cos^2x}\\
 &=\frac{1-r}{\pi\sqrt r}
   \operatorname{artanh}\frac{2\sqrt r}{1+r}
 =\frac{1-r}{\pi\sqrt r}
   \log\frac{1+\sqrt r}{1-\sqrt r}.
\end{align*}
This proves \eqref{example:exact-integrals}. If $r=e^{-t}$ and
$t\downarrow0$, then $1-r\sim t$ and
\[
 \log\frac{1+\sqrt r}{1-\sqrt r}
 =\log\coth\frac{t}{4}\sim\log\frac1t.
\]
Therefore
\[
 C_r\longrightarrow1,
 \qquad
 S_r\sim\frac{t}{\pi}\log\frac1t,
 \qquad
 \frac{S_r}{C_r}\longrightarrow0.
\]
The two bounds in \eqref{example:two-sided} now give
\eqref{example:sharp-asymptotic}.
For all sufficiently small $t>0$, one has $t\le1/2$, while
$q_{\theta_r}(0)=0$ and
$\log(e/t)\sim\log(1/t)$. Thus this family satisfies all the
hypotheses of Theorem~\ref{thm:quantitative}, and its infimal
bounded-symbol norm has the same growth rate as the universal lower
bound. This proves the asserted sharpness.

Finally, every fixed $\theta_r$ is one-component. Indeed, if
\[
 \tau_r(w)=\frac{r-w}{1-rw},
\]
then $\theta_r(z)=\tau_r(z^2)$. For any
$\varepsilon\in(r,1)$, the pseudohyperbolic disk
\[
 G_{r,\varepsilon}
 =\{w\in\D:\abs{\tau_r(w)}<\varepsilon\}
\]
is the Euclidean disk with center and radius
\[
 c_{r,\varepsilon}
 =\frac{r(1-\varepsilon^2)}{1-r^2\varepsilon^2},
 \qquad
 \rho_{r,\varepsilon}
 =\frac{\varepsilon(1-r^2)}{1-r^2\varepsilon^2}.
\]
Since
\[
 \rho_{r,\varepsilon}-c_{r,\varepsilon}
 =\frac{(\varepsilon-r)(1+r\varepsilon)}
        {1-r^2\varepsilon^2}>0,
\]
this disk contains $0$ and hence is star-shaped about $0$. Its preimage
under the map $z\mapsto z^2$ is also star-shaped about $0$: if
$z^2\in G_{r,\varepsilon}$ and
$0\le\lambda\le1$, then $\lambda^2z^2\in G_{r,\varepsilon}$, so
$\lambda z$ lies in its inverse image. The inverse image is therefore
connected. Thus $\theta_r$ is one-component. At the same time,
\eqref{example:sharp-asymptotic} shows that the constants $B_{\theta_r}$
in Lemma~\ref{pre:uniform} cannot remain bounded as $r\uparrow1$.
\end{proof}

\section*{Acknowledgements}

This work was partially supported by the National Natural Science
Foundation of China (Grant No.~12171484 and 12571140),the Science and Technology Innovation Program of Hunan Province (Grant No. 2023RC3028), the Central South University Innovation-Driven Research Programme (Grant No. 2023CXQD032) and the Hunan
Basic Science Research Center for Mathematical Analysis
(Grant No.~2024JC2002).

\section*{AI Declaration}

The author used OpenAI's ChatGPT (GPT-5.6 Sol)  for language editing, LaTeXformatting, and limited assistance with local mathematical reasoning. The proof strategy andmathematical development are the author’ own; all mathematical content was independentlywritten and checked by the author, who take full responsibility for it.

\end{document}